\documentclass[11pt,leqno]{amsart}
\usepackage{amsmath,amsfonts,latexsym,graphicx,amssymb,url, color,cite}
\usepackage{hyperref}
\usepackage[margin=1in]{geometry} 
\newcommand{\average}{{\mathchoice {\kern1ex\vcenter{\hrule
height.4pt width 6pt depth0pt} \kern-9.7pt}
{\kern1ex\vcenter{\hrule height.4pt width 4.3pt depth0pt}
\kern-7pt} {} {} }}

\setcounter{tocdepth}{2}
\makeatletter
\def\l@subsection{\@tocline{2}{0pt}{2.5pc}{5pc}{}}
\makeatother

\begin{document}

\newcommand{\dist}{\text{dist}} 
\newcommand{\abs}[1]{\left\vert#1\right\vert}
\newcommand{\diam}{\text{diam}}
\newcommand{\trace}{\text{trace}}
\newcommand{\R}{{\mathbb R}} 
\newcommand{\C}{{\mathbb C}}
\newcommand{\Z}{{\mathbb Z}}
\newcommand{\N}{{\mathbb N}}
\newcommand{\gradg}{\nabla_{\G}}
\newcommand{\e}{\varepsilon} 
\newcommand{\p}{\partial}

\newcommand{\calW}{{\mathrm W}}

\newcommand{\Rn}{\mathbb R^n}
\newcommand{\Rm}{\mathbb R^m}
\renewcommand{\L}[1]{\mathcal L^{#1}}
\newcommand{\G}{\mathbb G}
\newcommand{\calL}{\mathcal L}
\newcommand{\U}{\mathcal U}
\newcommand{\M}{\mathcal M}
\newcommand{\eps}{\epsilon}
\newcommand{\BVG}{BV_{\G}(\Omega)}
\newcommand{\no}{\noindent}
\newcommand{\ro}{\varrho}
\newcommand{\rn}[1]{{\mathbb R}^{#1}}
\newcommand{\res}{\mathop{\hbox{\vrule height 7pt width .5pt depth 0pt
\vrule height .5pt width 6pt depth 0pt}}\nolimits}
\newcommand{\hhd}[1]{{\mathcal H}_d^{#1}} \newcommand{\hsd}[1]{{\mathcal
S}_d^{#1}} \renewcommand{\H}{\mathbb H}
\newcommand{\BVGL}{BV_{\G,{\rm loc}}}
\newcommand{\GH}{H\G}
\renewcommand{\diam}{\mbox{diam}\,}
\renewcommand{\div}{\mbox{div}\,}
\newcommand{\divg}{\mathrm{div}_{\G}\,}
\newcommand{\norm}[1]{\|{#1}\|_{\infty}} \newcommand{\modul}[1]{|{#1}|}
\newcommand{\per}[2]{|\partial {#1}|_{\G}({#2})}
\newcommand{\Per}[1]{|\partial {#1}|_{\G}} \newcommand{\scal}[3]{\langle
{#1} , {#2}\rangle_{#3}} \newcommand{\Scal}[2]{\langle {#1} ,
{#2}\rangle}
\newcommand{\fron}{\partial^{*}_{\G}}
\newcommand{\hs}[2]{S^+_{\H}({#1},{#2})} \newcommand{\test}{\mathbf
C^1_0(\G,\GH)} \newcommand{\Test}[1]{\mathbf C^1_0({#1},\GH)}
\newcommand{\card}{\mbox{card}}
\newcommand{\bom}{\bar{\gamma}}
\newcommand{\shpiu}{S^+_{\G}}
\newcommand{\shmeno}{S^-_{\G}}
\newcommand{\CG}{\mathbf C^1_{\G}}
\newcommand{\CH}{\mathbf C^1_{\H}}
\newcommand{\di}{\mathrm{div}_{X}\,}
\newcommand{\norma}{\vert\!\vert}
\newenvironment{myindentpar}[1]%
{\begin{list}{}%
         {\setlength{\leftmargin}{#1}}%
         \item[]%
}
{\end{list}}

\renewcommand{\baselinestretch}{1.2}

\theoremstyle{plain}
\newtheorem{theorem}{Theorem}[section]
\newtheorem{corollary}[theorem]{Corollary}
\newtheorem{lemma}[theorem]{Lemma}
\newtheorem{proposition}[theorem]{Proposition}
\newtheorem{definition}[theorem]{Definition}
\newtheorem{rem}[theorem]{Remark}
\providecommand{\bysame}{\makebox[3em]{\hrulefill}\thinspace}

\renewcommand{\theequation}{\thesection.\arabic{equation}}

\title[Global H\"older estimates for 
linearized Monge--Amp\`ere ]{ Global H\"older estimates for 2D
linearized Monge--Amp\`ere equations  with right-hand side in divergence form}
\date{}
\author{Nam Q. Le}
\address{Department of Mathematics, Indiana University, Bloomington, IN 47405, USA}
\email{nqle@indiana.edu}
\thanks{The research of the author was supported in part by NSF grant DMS-1764248.}

\subjclass[2010]{35J70, 35B65, 35B45, 35J96}
\keywords{Linearized Monge-Amp\`ere equation, global H\"older estimates, Green's function}
\maketitle
\begin{abstract}
 We
 establish global H\"older estimates for solutions to inhomogeneous
linearized Monge--Amp\`ere equations in two dimensions with the right hand side being the divergence of a bounded vector field. These equations arise in the semi-geostrophic equations in meteorology
and in the approximation of convex functionals subject to a convexity constraint using fourth order Abreu type equations.
Our estimates hold under natural assumptions on the domain, boundary data and Monge-Amp\`ere measure being bounded away from zero and infinity. 
They are an affine invariant and degenerate version of 
global H\"older estimates by Murthy-Stampacchia and Trudinger for second order elliptic equations in divergence form. 

\end{abstract}

\setcounter{equation}{0}
\section{Introduction and statement of the main result}\label{intro}
In this paper, we  
establish global H\"older estimates for solutions to inhomogeneous
linearized Monge--Amp\`ere equations in two dimensions with the right hand side being the divergence of a bounded vector field; see Theorem \ref{global-reg}. Theorem \ref{global-reg} is an affine invariant and degenerate version of 
global H\"older estimates by Murthy-Stampacchia \cite{MuSt} and Trudinger \cite{Tr} for second order elliptic equations in divergence form with coefficient matrices together with their inverses having highly integrable eigenvalues. 
Our global H\"older estimates hold under natural assumptions on the domain, boundary data and Monge-Amp\`ere measure being bounded away from zero and infinity.
They are the global counterpart of the interior H\"older estimates recently established in \cite{LCMP} that we will recall in Theorem \ref{Holder_int_thm}. A crucial tool for our global H\"older estimates is the global $W^{1,1+\e}$ estimates for the Green's function of the linearized Monge-Amp\`ere operator in two dimensions established in Theorem \ref{DG_lem}. An application of Theorem \ref{global-reg} to solvability of singular, fourth order Abreu type equations will be 
presented in Theorem \ref{SBV3}.

Let $\Omega\subset \R^n$ ($n\geq 2$) be a bounded convex domain and let $\phi\in C^2(\Omega)$ be a locally uniformly convex function on  $\Omega$. The linearized  Monge-Amp\`ere equations corresponding to $\phi$ are of the form
\begin{equation}\label{LMA-eq}
\mathcal{L}_{\phi} u:=- \sum_{i, j=1}^{n} \Phi^{ij} u_{ij}= f\quad \mbox{in}\quad \Omega,
\end{equation}
where $$\Phi=\big(\Phi^{ij}\big)_{1\leq i, j\leq n} := (\det D^2 \phi )~ (D^2\phi)^{-1}
$$ is the cofactor matrix of the Hessian matrix $\displaystyle D^2\phi= (\phi_{ij})_{1\leq i, j\leq n}$. 
The operator $\mathcal{L}_\phi$ appears in
several contexts including affine differential geometry \cite{TW00}, complex geometry \cite{D05}, and fluid mechanics \cite{ACDF12, ACDF14, CNP91, Loe1, Loe}. 
In these contexts, one usually encounters the linearized Monge-Amp\`ere equations with 
the Monge-Amp\`ere measure $\det D^2\phi$ satisfying the pinching condition
\begin{equation}\lambda\leq \det D^{2} \phi\leq \Lambda.
 \label{pinch1}
\end{equation}
In this paper, we focus our attention to (\ref{LMA-eq}) under (\ref{pinch1}). Notice that since $\Phi$ is positive semi-definite, $\mathcal{L}_{\phi}$ is a linear elliptic partial differential operator, 
possibly both degenerate and singular.
Caffarelli 
and Guti\'errez initiated the study of the linearized Monge-Amp\`ere equations in the fundamental paper \cite{CG97}. 
There they developed an interior Harnack inequality theory for nonnegative solutions of the homogeneous
equation $\mathcal{L}_{\phi} u=0$ in terms of the pinching of the Hessian determinant in (\ref{pinch1}).
This theory is an affine invariant version of the classical Harnack inequality for linear, uniformly elliptic equations with measurable coefficients. As a consequence, they obtained interior H\"older estimates for the
homogeneous linearized Monge-Amp\`ere equation  $\mathcal{L}_{\phi} u=0$.

For the inhomogeneous equation (\ref{LMA-eq}) with $L^q$ right hand side $f$ where $q\geq 1$, 
Nguyen and the author \cite{LN3} recently established an interior Harnack inequality, interior H\"older estimates, and global H\"older estimates for solutions under natural 
conditions when $q>n/2$, which is the optimal range of $q$. The interior and global H\"older estimates, respectively, in \cite{LN3} rely heavily on 
the corresponding interior and global high integrability
of Green's function of the 
linearized Monge--Amp\`ere  operator $\mathcal{L}_\phi$ established in \cite{L, Le15}.

Regarding H\"older estimates, less is know about (\ref{LMA-eq}) when the right hand side $f$ is the divergence of a bounded vector field. This type of inhomogeneous linearized Monge-Amp\`ere equations arises in the semi-geostrophic equations in meteorology \cite{ACDF12, ACDF14, CNP91, LCMP, Loe1, Loe}.  
They also appear in second boundary value problems of fourth order equations of Abreu type arising from approximation of convex functionals whose Lagrangians depend on the gradient variable, subject to a convexity constraint; see \cite{Le18}. 
These functionals arise in different scientific disciplines such as Newton's problem of minimal resistance in physics and monopolist's problem in economics \cite{BFK, RC}.

In the case of the semi-geostrophic equations in meteorology, when the Monge-Amp\`ere measures $\det D^2\phi$ are continuous, interior H\"older estimates for solutions to (\ref{LMA-eq}) were obtained by Loeper \cite{Loe1}. However, when the measures $\det D^2\phi$ are only bounded away from
zero and infinity,  up to now, interior H\"older estimates have only been obtained in two dimensions \cite{LCMP} which we recall here: 

\begin{theorem}[Interior H\"older estimates, \cite{LCMP}]
\label{Holder_int_thm}
Assume $n=2$. Let $\phi \in C^2(\Omega)$ be a convex function satisfying 
$0 <\lambda \leq \det D^2\phi \leq \Lambda$ in $\Omega$.
Let $F:\Omega\rightarrow \R^n$ is a bounded vector field. Given a
section $S_\phi(x_0, 4h_0) \subset \subset \Omega$.
Let $p \in (1, \infty)$.
There exist a universal constant $\gamma >0$ depending only on $\lambda$ and $\Lambda$ 
and a constant $C >0$, depending only on $p$, $\lambda$, $\Lambda, h_0$ and $\diam(\Omega)$ with the following property.
For every solution $u$ to 
\begin{equation}
\label{main_eq}
\Phi^{ij} u_{ij}=\emph{div} F
\end{equation}
 in $S_{\phi}(x_0, 4h_0)$, and for all $x\in S_{\phi}(x_0, h_0)$,  we have the H\"older estimate:
 \begin{equation*}
  |u(x)-u(x_0)|\leq  C(p,\lambda,\Lambda, \emph{diam}(\Omega), h_0)  \left( 
 \|F \|_{L^{\infty}(S_\phi(x_0, 2h_0))} + \|u\|_{L^{p}(S_\phi(x_0, 2h_0))} \right)|x-x_0|^{\gamma}.
 \end{equation*}
\end{theorem}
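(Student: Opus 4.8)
\medskip
\noindent The plan is to run the standard divergence-form argument in affine-invariant dress: reduce everything to a single step of oscillation decay over the sections $S_\phi(x_0,r)$, which play the role of Euclidean balls in the Caffarelli--Guti\'errez theory, feeding in two degenerate-elliptic substitutes for their uniformly elliptic counterparts --- the interior H\"older estimate for $\mathcal{L}_\phi v=0$ (a consequence of the Caffarelli--Guti\'errez Harnack inequality under \eqref{pinch1}) and the $W^{1,1+\e}$ estimate for the Green's function of $\mathcal{L}_\phi$ (the interior form of Theorem~\ref{DG_lem}). Since $F$ enters \eqref{main_eq} only through $\div F$, there is no global particular solution to subtract, so the work must be done section by section; after an affine normalization (John's lemma) the section in play is comparable to a ball and \eqref{pinch1} is preserved with the same constants. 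The heart of the matter is an oscillation-decay lemma: there should exist $\theta,\mu\in(0,1)$ and $\beta_0>0$ depending only on $\lambda,\Lambda$ so that for every $S_\phi(x_0,r)\subseteq S_\phi(x_0,2h_0)$ and every solution $u$ of \eqref{main_eq},
\[ \mathrm{osc}_{\,S_\phi(x_0,\theta r)}\,u\ \le\ (1-\mu)\,\mathrm{osc}_{\,S_\phi(x_0,r)}\,u\ +\ C\,r^{\beta_0}\,\|F\|_{L^\infty(S_\phi(x_0,r))}. \]

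To prove the lemma, on $S:=S_\phi(x_0,r)$ I would split $u=v+w$ with $\mathcal{L}_\phi v=0$ in $S$, $v=u$ on $\partial S$, and $\mathcal{L}_\phi w=\div F$ in $S$, $w=0$ on $\partial S$. The maximum principle gives $\mathrm{osc}_S v\le \mathrm{osc}_S u$, and the interior H\"older estimate for $\mathcal{L}_\phi v=0$ gives $\mathrm{osc}_{S_\phi(x_0,\theta r)}v\le C_0\theta^{\gamma_0}\,\mathrm{osc}_S v$ with universal $\gamma_0,C_0$; I then fix $\theta=\theta(\lambda,\Lambda)$ so small that $C_0\theta^{\gamma_0}\le\tfrac12$. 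For $w$, self-adjointness of $\mathcal{L}_\phi$ and the Green's function $G_S$ of $S$ give $w(y)=\int_S \nabla_x G_S(x,y)\cdot F(x)\,dx$, hence
\[ \sup_{S_\phi(x_0,\theta r)}|w|\ \le\ \|F\|_{L^\infty(S)}\ \sup_{y\in S_\phi(x_0,\theta r)}\int_{S}|\nabla_x G_S(x,y)|\,dx. \]
The last integral I would bound by $C r^{\beta_0}$ by combining the $W^{1,1+\e}$ estimate for $G_S$ (uniform for poles in $S_\phi(x_0,\theta r)$, since $\theta$ is universal), H\"older's inequality, and Caffarelli's volume bound $|S_\phi(x_0,r)|\le C r^{n/2}$; this is precisely where $n=2$ enters, as $n/2=1$ keeps the net power of $r$ positive. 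Adding the two estimates proves the lemma.

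With the lemma, iterating along $r_k=\theta^k h_0$ yields $\mathrm{osc}_{S_\phi(x_0,r_k)}u\le C\sigma^k\big(\mathrm{osc}_{S_\phi(x_0,h_0)}u+\|F\|_{L^\infty(S_\phi(x_0,h_0))}\big)$ with $\sigma=\max(1-\mu,\theta^{\beta_0})<1$, and the engulfing property together with the two-sided comparison of sections with balls converts $\sigma^k$ into a power $|x-x_0|^\gamma$, $\gamma=\gamma(\lambda,\Lambda)>0$. To finish, I would invoke the interior local maximum principle for $\mathcal{L}_\phi$ --- a De Giorgi-type bound, legitimate here because the $W^{2,1+\delta}$ theory for $\det D^2\phi$ pinched makes $\|D^2\phi\|$, hence $\|\Phi^{-1}\|$, locally $L^{1+\delta}$, which controls the term $\int|F|^2\|\Phi^{-1}\|$ in the energy inequality --- to bound $\mathrm{osc}_{S_\phi(x_0,2h_0)}u\le 2\sup_{S_\phi(x_0,2h_0)}|u|$ by $C(p,\lambda,\Lambda,\diam(\Omega),h_0)\big(\|u\|_{L^p(S_\phi(x_0,2h_0))}+\|F\|_{L^\infty(S_\phi(x_0,2h_0))}\big)$, which is the asserted form.

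The main obstacle is the Green's function ingredient: one needs a genuine positive power $r^{\beta_0}$, not merely $\sup|w|\le C\|F\|_\infty$, because with only a $W^{1,1}$ bound the iteration does not close (it returns boundedness of $w$, not a modulus of continuity). Proving the $W^{1,1+\e}$ estimate (Theorem~\ref{DG_lem}) is where two dimensions is essential: I would run a De Giorgi iteration on the super-level sets $\{G_S>s\}$, combining the energy inequality $\int_{\{G_S>s\}}\Phi\,\nabla G_S\cdot\nabla G_S\le Cs$ with a Sobolev/isoperimetric inequality adapted to sections and the $W^{2,1+\delta}$ regularity of $\phi$ to upgrade the resulting $L^1$ information to $L^{1+\e}$; in higher dimensions even a $W^{1,1}$ bound for $G$ away from the pole is unavailable without continuity of $\det D^2\phi$. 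The Caffarelli--Guti\'errez Harnack inequality, the affine normalization and section geometry, and the local maximum principle I would treat as standard input.
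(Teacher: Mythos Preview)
The paper does not give a proof of this theorem; it is quoted from \cite{LCMP} as a known interior result that the present paper then globalizes. Your outline is exactly the argument of \cite{LCMP} and is the interior template behind Lemma~\ref{global-ball} and Theorem~\ref{global-H} here: split $u=v+w$ on each section, apply the Caffarelli--Guti\'errez interior H\"older estimate to the $\mathcal{L}_\phi$-harmonic part $v$, control $w$ through the Green's-function representation together with the interior $W^{1,1+\kappa}$ bound for $G_S$ (the interior precursor of Theorem~\ref{DG_lem}, from \cite{L,LCMP}), iterate the oscillation decay, and convert section heights to Euclidean distances via the volume and engulfing properties of sections. One small clarification: the two-dimensional restriction does not enter through the volume exponent $n/2$ in your H\"older step (that power is harmless in any dimension), but rather---as you correctly say later---through the Green's-function estimate itself, since Theorem~\ref{DG_lem} only yields $\kappa>0$ when $n=2$ under the bare pinching \eqref{pinch1}.
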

In Theorem \ref{Holder_int_thm}, 
the section of  a convex function $\phi\in C^1(\overline{\Omega})$ at $x\in\overline\Omega$ with height $h$ is defined by
\begin{equation*}\label{def:section}
S_\phi(x,h) = \Big\{y\in \overline{\Omega}: \, \phi(y)< \phi(x) + D
\phi(x)\cdot (y-x) + h\Big\}.
\end{equation*}

When $F\equiv 0$, Theorem \ref{Holder_int_thm} was
established, in all dimensions, by Caffarelli and Guti\'errez in \cite{CG97}. 
Because $\Phi$ is 
divergence-free, that is, $ \sum_{i=1}^n \p_i \Phi^{ij}=0$ for all $j$, we can also write $\mathcal{L}_{\phi}$ as a divergence form 
operator: $$\mathcal{L}_{\phi} u=- \sum_{i, j=1}^{n} \partial_i (\Phi^{ij} u_{j}).$$ 
Thus, Theorem \ref{Holder_int_thm} can be viewed as an affine invariant version of 
related results by Murthy-Stampacchia \cite{MuSt} and Trudinger \cite{Tr} for second order elliptic equations in divergence form.
These authors studied the maximum principle, local and global estimates, local and global regularity for degenerate elliptic equations in the divergence form
\begin{equation}
\label{2divforme}
\small
\div (M(x)\nabla v(x))= \div F(x)~\text{in}~\Omega\subset\R^n.
\end{equation}
where $M(x)= (M_{ij}(x))_{1\leq i, j\leq n}$ is  nonnegative symmetric matrix, and
$F$ is a bounded vector field in $\R^n$. To obtain the H\"older regularity for solutions to (\ref{2divforme}), Murthy-Stampacchia and Trudinger required 
the high integrability 
of the eigenvalues of $M(x)$ and their inverses.
By Wang's counterexamples \cite{W95} to $W^{2,p}$ estimates for the Monge-Amp\`ere equations, this condition fails for 
the matrix $M=(\Phi^{ij})$ in Theorem 
\ref{Holder_int_thm} (even in two dimensions) when the ratio $\Lambda/\lambda$ is large.

A natural question regarding Theorem \ref{Holder_int_thm} is whether one can obtain the global H\"older estimates for solutions to (\ref{main_eq}) under suitable boundary conditions.
In this paper, we answer this question in the affirmative in two dimensions. Precisely, we obtain:
\begin{theorem}[Global H\"older estimates]
 \label{global-reg} Let $\Omega\subset \R^n$ be a bounded convex domain.
 Assume that $n=2$.
Assume that there exists a small constant $\rho>0$ such that
\begin{equation}
\label{cond1}
\Omega \subset B_{1/\rho}(0) ~\text{and for each } y\in\p\Omega~ \text{there is a ball } B_{\rho}(z)\subset \Omega~ \text{that is tangent to } \p\Omega ~\text{at } y. 
\end{equation}
Let  $\phi \in C^{0,1}(\overline 
\Omega) 
\cap 
C^2(\Omega)$  be a convex function satisfying
\begin{equation} 
\label{cond2}
0<\lambda\leq \det D^2 \phi \leq \Lambda \, \mbox{ in }\, \Omega.
\end{equation}
Assume further that 
on $\p \Omega$, $\phi$ 
separates quadratically from its 
tangent planes,  namely
\begin{equation}
\label{eq:sepa}
 \rho\abs{x-x_{0}}^2 \leq \phi(x)- \phi(x_{0})-D \phi(x_{0}) \cdot (x- x_{0})
 \leq \rho^{-1}\abs{x-x_{0}}^2, ~\text{for all}~ x, x_{0}\in\p\Omega.
\end{equation}
Let $u: \overline{\Omega}\rightarrow \R$ be a continuous function that 
solves the linearized Monge-Amp\`ere equation 
\begin{equation}
\label{bdr_div}
 \left\{
 \begin{alignedat}{2}
   \Phi^{ij}u_{ij} ~& = \div F ~&&\text{in} ~ \Omega, \\\
u &= \varphi ~&&\text{on}~\p \Omega,
 \end{alignedat} 
  \right.
\end{equation}
where $\varphi$ is a $C^{\alpha}$ function defined on $\p\Omega$ $(0<\alpha\leq 1)$ and $F\in L^{\infty}(\Omega)$. 
Then, there are positive constants $\alpha_1\in (0,1)$ and $K$ depending only on 
$\rho, \alpha, \lambda, \Lambda$ such that 
the following global H\"older estimates hold:
\begin{equation*}
 \|u\|_{C^{ \alpha_1}(\Omega)} \leq K\big( \|\varphi\|_{C^{\alpha}(\p \Omega)}+ \|F\|_{L^{\infty}(\Omega)}\big).
\end{equation*}
\end{theorem}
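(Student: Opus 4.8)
The plan is to combine the interior H\"older estimate of Theorem~\ref{Holder_int_thm} with a boundary H\"older estimate, and then glue the two via a standard dyadic/scaling argument. The main new ingredient will be the boundary estimate, and the principal tool for it will be the global $W^{1,1+\e}$ bound for the Green's function of $\mathcal L_\phi$ from Theorem~\ref{DG_lem}.

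\medskip
\emph{Step 1: Reduction and normalization.} First I would reduce to the case $\varphi\equiv 0$ by subtracting a suitable extension of $\varphi$; but since $\mathcal L_\phi$ does not preserve the class of competitors nicely, it is cleaner instead to treat $u$ directly and bound $u(x)-\varphi(x_0)$ for $x_0\in\p\Omega$ and $x$ near $x_0$. By the affine invariance of the equation and of hypotheses \eqref{cond1}, \eqref{cond2}, \eqref{eq:sepa} (with $\rho$ replaced by a universal constant after a John-type normalization of $\Omega$), one may assume $\Omega$ is normalized. The global maximum principle together with \eqref{eq:sepa} and Aleksandrov--Bakelman--Pucci type estimates gives the a priori bound $\|u\|_{L^\infty(\Omega)}\le C(\|\varphi\|_{L^\infty(\p\Omega)}+\|F\|_{L^\infty(\Omega)})$, which in particular controls the $\|u\|_{L^p}$ term appearing in Theorem~\ref{Holder_int_thm}.

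\medskip
\emph{Step 2: Boundary H\"older estimate.} Fix $x_0\in\p\Omega$. Using the Green's representation formula for $\mathcal L_\phi$ on $\Omega$ (or on a section $S_\phi(x_0,h)$ touching the boundary), write
\[
u(x)-\varphi(x_0)=\int_{\p\Omega}\big(\varphi(y)-\varphi(x_0)\big)\,d\mu^x(y)-\int_\Omega \nabla_y G(x,y)\cdot F(y)\,dy,
\]
where $\mu^x$ is the $\mathcal L_\phi$-harmonic measure and $G$ is the Green's function. The first term is estimated by $\|\varphi\|_{C^\alpha(\p\Omega)}$ times the $\alpha$-decay of the harmonic measure of $\p\Omega\cap\{|y-x_0|>r\}$ as seen from $x$ near $x_0$; this decay follows from the boundary barrier / boundary Harnack machinery available under \eqref{cond1}--\eqref{eq:sepa} (localization of sections at the boundary, as in the Savin-type boundary regularity theory for Monge--Amp\`ere and its linearization). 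The second term is controlled by $\|F\|_{L^\infty}$ times $\int_{\Omega\cap B_r(x_0)}|\nabla_y G(x,y)|\,dy$, and here Theorem~\ref{DG_lem} gives $\nabla_y G(x,\cdot)\in L^{1+\e}$ globally with a universal bound, so H\"older's inequality yields a bound of order $|\Omega\cap B_r(x_0)|^{\e/(1+\e)}\le C r^{\beta}$ for some $\beta>0$. Choosing $r\sim|x-x_0|^{\theta}$ for an appropriate $\theta$ and optimizing produces $|u(x)-\varphi(x_0)|\le C(\|\varphi\|_{C^\alpha}+\|F\|_{L^\infty})|x-x_0|^{\alpha_2}$ for all $x_0\in\p\Omega$, $x\in\Omega$, with $\alpha_2>0$ universal.

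\medskip
\emph{Step 3: Gluing interior and boundary.} For two interior points $x_1,x_2$, let $d_i=\dist(x_i,\p\Omega)$ and suppose $d_1\le d_2$. If $|x_1-x_2|\ge d_1/2$, pick $\bar x_1\in\p\Omega$ with $|x_1-\bar x_1|=d_1$ and estimate $|u(x_1)-u(x_2)|$ through $u(\bar x_1)$ (and a second boundary point near $x_2$ if needed), using Step~2; the geometric conditions \eqref{cond1}, \eqref{eq:sepa} guarantee that sections centered near the boundary shrink comparably to Euclidean balls, so the relevant heights are $\sim d_i^{\,2}$, matching powers correctly. If $|x_1-x_2|<d_1/2$, both points lie in a fixed fraction of a single interior section $S_\phi(x_1, c\,d_1^{2})\subset\subset\Omega$, and Theorem~\ref{Holder_int_thm} (applied after rescaling the section to unit size, which is legitimate by affine invariance and \eqref{cond2}) gives the interior oscillation bound; one must track how the constant in Theorem~\ref{Holder_int_thm} depends on the height $h_0\sim d_1^2$ and on $\diam$ under rescaling, and check the resulting negative power of $d_1$ is beaten by the positive power coming from $|x_1-x_2|/d_1$ — this is where one fixes the final exponent $\alpha_1=\min\{\alpha_2, \gamma\cdot(\text{something})\}$ appropriately small.

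\medskip
\emph{Main obstacle.} The hard part is Step~2, specifically obtaining the quantitative $\alpha$-decay of the $\mathcal L_\phi$-harmonic measure near $\p\Omega$ in this degenerate/singular setting where no $W^{2,p}$ bound on $\phi$ is available: one cannot use classical barriers, and must instead rely on the affine-invariant boundary localization of sections (engulfing and volume estimates up to the boundary under \eqref{cond1}--\eqref{eq:sepa}) combined with the global integrability of $\nabla_y G$ from Theorem~\ref{DG_lem}. Getting the powers of the height/radius to match when passing between sections and Euclidean balls at the boundary, and then splicing with the interior estimate whose constant degenerates in $h_0$, is the delicate bookkeeping that determines whether the final exponent $\alpha_1$ is positive.
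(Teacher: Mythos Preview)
Your strategy matches the paper's: interior H\"older (Theorem~\ref{Holder_int_thm}) plus a boundary H\"older estimate (the paper's Proposition~\ref{local-H}) glued via Savin's boundary localization (the paper's Theorem~\ref{global-H}), with Theorem~\ref{DG_lem} driving both the $L^\infty$ bound (Lemma~\ref{global-ball}) and the boundary estimate. Two places in your sketch need sharpening, and the paper's execution shows how.

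In Step~1 you invoke ABP for the global $L^\infty$ bound, but classical ABP requires the right-hand side in $L^n$, and $\div F$ with $F\in L^\infty$ is in general only a distribution; taken literally this step fails. The paper's Lemma~\ref{global-ball} instead writes the particular solution as $v(x)=\int_V G_V(x,y)\,\div F(y)\,dy=-\int_V \nabla_y G_V(y,x)\cdot F(y)\,dy$ (integrating by parts, using $G_V=0$ on $\p V$ and the symmetry $G_V(x,y)=G_V(y,x)$) and bounds it by $\|F\|_{L^\infty}\,\|\nabla G_V(\cdot,x)\|_{L^1}\le C|V|^{\kappa_2}\|F\|_{L^\infty}$ via Theorem~\ref{DG_lem} and H\"older; ABP enters only for the trivial comparison $u\le\sup_{\p V}u^+ + v$. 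In Step~2 there is a related tension: if you represent on all of $\Omega$, the $F$-integral is over $\Omega$ and carries no decay in $|x-x_0|$; if instead you represent on $V=\Omega\cap B_\delta(x_0)$ (which is how Theorem~\ref{DG_lem}(ii) is set up, and is what produces the factor $|V|^{\kappa_2}$ you want), then the boundary term picks up $u$, not $\varphi$, on the interior piece $\Omega\cap\p B_\delta(x_0)$ of $\p V$. The paper resolves this exactly as you hint (``boundary barrier machinery''): Proposition~\ref{local-H} combines Lemma~\ref{global-ball} on $V$ with the explicit barriers of \cite[Proposition~5.1]{LN3}, available under \eqref{om_ass}--\eqref{eq_u1}, to force the oscillation to decay. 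So your identification of the main obstacle is accurate, and the missing pieces are precisely that the $L^\infty$ bound already requires Theorem~\ref{DG_lem}, and that the decay on the interior boundary of the local patch comes from barriers rather than from a direct harmonic-measure estimate.
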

The global 
 H\"older estimates in Theorem \ref{global-reg} are an affine invariant and degenerate version of 
global H\"older estimates by Murthy-Stampacchia \cite{MuSt} and Trudinger \cite{Tr} for second order elliptic equations in divergence form. Our proof of Theorem \ref{global-reg} relies heavily on 
the new global high integrability 
of the gradient of Green's function of the 
 linearized Monge-Amp\`ere operator $\mathcal{L}_\phi$ in two dimensions. It is an open question whether our interior and global H\"older estimates for (\ref{bdr_div}) can be obtained in higher dimensions. 

We note from \cite[Proposition 3.2]{S2} that the quadratic separation \eqref{eq:sepa} holds 
for solutions to 
the Monge-Amp\`ere equations with the right hand side bounded away from $0$ and $\infty$ on uniformly convex domains and $C^3$ boundary data.

In the next theorem, we give an application of Theorem \ref{global-reg} to solvability of the second boundary value problem of singular, fourth order, fully nonlinear equations of Abreu type; see \cite{Le18} for a different approach using global
H\"older estimates for linearized Monge-Amp\`ere equation with right hand side having low integrability. 

\begin{theorem}
\label{SBV3}
Let $\Omega\subset\R^2$ be an open, smooth, bounded and uniformly convex domain.
Let $\varphi\in C^{\infty}(\overline{\Omega})$ and $\psi\in C^{\infty}(\overline{\Omega})$ with $\inf_{\p \Omega}\psi>0$. Then there exists a unique smooth,  uniformly convex solution $u\in C^{\infty}(\overline{\Omega})$ to
the following
second boundary value problem: 
\begin{equation}
  \left\{ 
  \begin{alignedat}{2}\sum_{i, j=1}^{2}U^{ij}w_{ij}~& =-|Du|^2\Delta u-2\sum_{i, j=1}^2 u_i u_i u_{ij}~&&\text{in} ~\Omega, \\\
 w~&= (\det D^2 u)^{-1}~&&\text{in}~ \Omega,\\\
u ~&=\varphi~&&\text{on}~\p \Omega,\\\
w ~&= \psi~&&\text{on}~\p \Omega.
\end{alignedat}
\right.
\label{Abreu3}
\end{equation}
Here $(U^{ij})$ is the cofactor matrix of $D^2 u$, that is, $(U^{ij})= (\det D^2 u) (D^2 u)^{-1}$.
\end{theorem}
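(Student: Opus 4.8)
The plan is to establish existence by the method of continuity, with the closedness step powered by Theorem~\ref{global-reg}, and uniqueness by the variational structure of the problem. The first observation is that the right-hand side of the first equation in \eqref{Abreu3} is a divergence: since $\div(|Du|^2 Du)=|Du|^2\Delta u+2\sum_{i,j}u_iu_ju_{ij}$, the system is equivalent to
\begin{equation*}
\sum_{i,j=1}^{2}U^{ij}w_{ij}=\div F\ \text{ in }\Omega,\qquad w=(\det D^2 u)^{-1}\ \text{ in }\Omega,\qquad u=\varphi,\ w=\psi\ \text{ on }\p\Omega,
\end{equation*}
with $F:=-|Du|^2 Du$, which lies in $L^\infty(\Omega)$ as soon as $u\in C^{0,1}(\overline{\Omega})$. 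For $t\in[0,1]$ I would consider the problem $(\mathrm{A}_t)$ obtained by replacing $F$ by $tF$; at $t=0$ this is the second boundary value problem for a classical Abreu-type equation, uniquely solvable in $C^\infty(\overline{\Omega})$ by known results, while $(\mathrm{A}_1)$ is \eqref{Abreu3}. Setting $\mathcal{T}=\{t\in[0,1]:(\mathrm{A}_t)\ \text{has a uniformly convex solution}\ (u,w)\in C^{4,\alpha_1}(\overline{\Omega})^2\}$, it then suffices to prove that $\mathcal{T}$ is nonempty (clear), open, and closed.

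Openness would follow from the implicit function theorem: linearizing the system at a uniformly convex solution and eliminating the variation of $w$ produces a fourth-order operator which, modulo lower-order terms, is the composition of two uniformly elliptic second-order operators — the linearized Monge--Amp\`ere operator $\sum U^{ij}\p_{ij}$ and the one from linearizing $(\det D^2 u)^{-1}$ — and which, together with the Dirichlet conditions $u=\varphi$, $w=\psi$, is an isomorphism between the relevant H\"older spaces by Schauder theory and the Fredholm alternative, the kernel being trivial by the maximum principle. For closedness one needs a priori estimates; here is the chain I would run. \textbf{(i)} As $D^2 u>0$, $\mathcal{L}_u w=-\sum U^{ij}w_{ij}=t\big(|Du|^2\Delta u+2\sum_{i,j}u_iu_ju_{ij}\big)\ge 0$, so $w$ is a supersolution of $\mathcal{L}_u$ and the minimum principle gives $w\ge\min_{\p\Omega}\psi>0$, i.e.\ $\det D^2 u\le\Lambda:=(\min_{\p\Omega}\psi)^{-1}$. \textbf{(ii)} Comparison with the Aleksandrov solution of $\det D^2 v=\Lambda$, $v=\varphi$ on $\p\Omega$, together with $u\le\max_{\p\Omega}\varphi$, bounds $\|u\|_{L^\infty(\overline{\Omega})}$. \textbf{(iii)} Since $\Omega$ satisfies the interior ball condition in \eqref{cond1} and $\varphi$ is smooth, the classical global gradient estimate for convex solutions of Monge--Amp\`ere with $\det D^2 u\le\Lambda$ gives $\|Du\|_{L^\infty(\overline{\Omega})}\le L$ (cf.\ \cite{Le18}). \textbf{(iv)} Using $|Du|\le L$ and $\lambda_{\max}(D^2 u)\le\Delta u$, one gets $|Du|^2\Delta u+2\sum_{i,j}u_iu_ju_{ij}\le 3L^2\Delta u=\mathcal{L}_u\big(-\tfrac{3}{2}L^2|x|^2\big)$, so $\mathcal{L}_u\big(w+\tfrac{3}{2}L^2|x|^2\big)\le 0$; the maximum principle together with $\Omega\subset B_{1/\rho}(0)$ then yields $w\le\|\psi\|_{L^\infty(\p\Omega)}+\tfrac{3L^2}{2\rho^2}$ in $\overline{\Omega}$, i.e.\ $\det D^2 u\ge\lambda>0$. (This is the step that uses $n=2$, through $\mathcal{L}_u(|x|^2)=-2\,\mathrm{tr}\,U=-2\Delta u$, which fails in higher dimensions.)

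With the pinching $0<\lambda\le\det D^2 u\le\Lambda$ secured, the remaining estimates are exactly where the theorem's main tool enters. Since $u\in C^{0,1}(\overline{\Omega})\cap C^2(\Omega)$, \eqref{cond1} holds, and the quadratic separation \eqref{eq:sepa} holds by \cite[Proposition 3.2]{S2} ($\Omega$ uniformly convex, $\varphi$ smooth, $\det D^2 u$ pinched), while $w$ solves $\sum U^{ij}w_{ij}=\div(tF)$ with $tF\in L^\infty(\Omega)$ and $w=\psi\in C^\alpha(\p\Omega)$, Theorem~\ref{global-reg} applied with $\phi=u$ gives $\|w\|_{C^{\alpha_1}(\Omega)}\le K\big(\|\psi\|_{C^\alpha(\p\Omega)}+\|F\|_{L^\infty(\Omega)}\big)$. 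Hence $\det D^2 u=w^{-1}$ is pinched and $C^{\alpha_1}$, so by Caffarelli's interior and Savin's global $C^{2,\beta}$ estimates $u\in C^{2,\beta}(\overline{\Omega})$; then $(U^{ij})$ is uniformly elliptic with $C^\beta$ entries and $\div(tF)\in C^\beta(\Omega)$, so Schauder gives $w\in C^{2,\beta}(\overline{\Omega})$, whence $\det D^2 u\in C^{2,\beta}$ and $u\in C^{4,\beta}(\overline{\Omega})$; alternating the Schauder estimates for the two equations yields $u,w\in C^\infty(\overline{\Omega})$, with all bounds uniform in $t$. This proves closedness, so $1\in\mathcal{T}$. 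Uniqueness I would deduce from the strict convexity of the functional whose Euler--Lagrange system is \eqref{Abreu3} (its leading parts $\int_\Omega\tfrac14|Du|^4$ and $\int_\Omega(-\log\det D^2 u)$ being convex in $Du$ and in $D^2 u$, respectively): comparing two solutions along the segment joining them forces them to coincide, and then so do the corresponding $w$'s.

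The hard part will be the uniform pinching of $\det D^2 u$ along the continuity path, and in particular the lower bound in step (iv): it is precisely the hypothesis that makes Theorem~\ref{global-reg} applicable, after which the elliptic bootstrap in the previous paragraph is routine. A secondary technical burden is verifying that the hypotheses of Theorem~\ref{global-reg} — above all the quadratic separation \eqref{eq:sepa}, supplied by \cite[Proposition 3.2]{S2} — hold with constants independent of $t$, so that the a priori estimate is genuinely uniform over the path.
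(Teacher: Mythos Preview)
Your proposal is correct and follows essentially the same route as the paper. The heart of the argument---the a priori pinching $0<\lambda\le\det D^2u\le\Lambda$ obtained via the minimum principle for $w$ (upper bound on $\det D^2u$), the gradient bound, and the barrier $w+C|x|^2$ exploiting $\mathrm{tr}\,U=\Delta u$ in dimension two (lower bound on $\det D^2u$), followed by an application of Theorem~\ref{global-reg} to get $w\in C^{\alpha_1}$ and then a Schauder bootstrap---is exactly what the paper does. The only differences are packaging: the paper invokes degree theory (deferring to \cite{Le18}) rather than the method of continuity, cites Trudinger--Wang \cite{TW08} for the global $C^{2,\alpha}$ Monge--Amp\`ere estimate, and refers uniqueness to \cite[Lemma~4.5]{Le18} instead of arguing via convexity of the functional.
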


The rest of the paper is devoted to proving Theorems \ref{global-reg} and \ref{SBV3}.  We use a priori estimates and degree theory to prove Theorem \ref{SBV3}. Note that the right hand side of the first equation in (\ref{Abreu3}) is the p-Laplacian with $p=4$:
$$-|Du|^2\Delta u-2u_i u_i u_{ij} = -\div (|Du|^2 Du).$$
We can assume that all functions $\phi$, $u$ in this paper are smooth. However, our estimates do not depend on the assumed smoothness but only on the given structural constants.

The analysis in this paper will be involved with $\Omega$ and $\phi$ satisfying either the global conditions (\ref{cond1})-(\ref{eq:sepa}) or  the following local conditions (\ref{om_ass})-(\ref{eq_u1}).

Let $\Omega\subset \R^{n}$ be a bounded convex set with
\begin{equation}\label{om_ass}
B_\rho(\rho e_n) \subset \, \Omega \, \subset \{x_n \geq 0\} \cap B_{\frac 1\rho} (0),
\end{equation}
for some small $\rho>0$ where we denote $e_n:= (0,\dots, 0, 1)\in \R^n$. Assume that 
\begin{equation}
\text{for each~} y\in\p\Omega\cap B_\rho(0), ~\text{there is a ball~} B_{\rho}(z)\subset \Omega \text{ that is tangent to } \p 
\Omega \text{ at } y.
\label{tang-int}
\end{equation}
Let 
 $\phi \in C^{0,1}(\overline 
\Omega) 
\cap 
C^2(\Omega)$  be a convex function satisfying
\begin{equation}\label{MAbound}
 0 <\lambda \leq \det D^2\phi \leq \Lambda \quad \text{in $\Omega$}.
\end{equation}
We assume that on $\p \Omega\cap B_\rho(0)$, 
$\phi$ separates quadratically from its tangent planes on $\p \Omega$, that is,
if $x_0 \in 
\p \Omega \cap B_\rho(0)$ then
\begin{equation}
 \rho\abs{x-x_{0}}^2 \leq \phi(x)- \phi(x_{0})-D\phi(x_{0}) \cdot (x- x_{0}) \leq 
\rho^{-1}\abs{x-x_{0}}^2\quad \text{ for all } x \in \p\Omega\cap \{x_n\leq \rho\}.
\label{eq_u1}
\end{equation}

We will use the letters $c, c_1, C, C_1, C', ...$, etc, to denote  {\it universal } constants that depend only on 
the structural constants $n, \rho, \lambda, \Lambda$  and/or $\alpha$. They may 
change from line to line. 

In Section \ref{G_sec}, we establish global $W^{1, 1+\kappa}$ estimates for the Green's function of the linearized Monge-Amp\`ere operator $\mathcal{L}_{\phi}$ under (\ref{pinch1}).
In Section \ref{L_sec}, we establish $L^{\infty}$ bounds and H\"older estimates at the boundary for solutions to (\ref{bdr_div}). The proofs of Theorems \ref{global-reg} and \ref{SBV3} will be given in Section \ref{P_sec}.

\section{Global $W^{1, 1+\kappa}$ estimates for the Green's function}

\label{G_sec}

Let $G_V(x,y)$ be the Green's function of $\mathcal{L}_{\phi}$ in $V$ with pole $y\in V\cap\Omega$ where $V\subset\overline{\Omega}$; that is $G_V(\cdot, y)$ is a positive solution of 
\begin{equation*}
\left\{\begin{array}{rl}
\mathcal{L}_{\phi} G_V( \cdot, y) &=\delta_y \qquad \mbox{ in}\quad V\cap\Omega,\\
G_V( \cdot, y) &=0\ \ \ \ \ \ \ \mbox{on}\quad \partial V
\end{array}\right.
\end{equation*}
with $\delta_y$ denoting the Dirac measure giving unit mass to the point $y$. 

Let $c_{\ast}= c_{\ast}(n, \lambda,\Lambda,\rho)$ be a small universal constant appearing in the global high integrability estimate \cite[inequality (5.12)]{LN3}  for the Green's function of $\mathcal{L}_{\phi}$.

Our main tools in this paper are following global estimates for the gradient of the Green's function of the linearized Monge-Amp\`ere operator $\Phi^{ij}\p_{ij}$
when the Monge-Amp\`ere measure $\det D^2\phi$ is only bounded away from zero and infinity. These estimates are the global version of those in \cite{L} in two dimensions; see also \cite{M} for related interior results in higher dimensions. 
\begin{theorem}[Global $W^{1,1+\kappa}$ estimates for the Green's function]
\label{DG_lem} 
There exists a universal constant $\kappa= \kappa(n, \lambda,\Lambda)>\frac{2-n}{3n-2}$ with the following property.\\
(i) Assume that $\Omega$ and $\phi$  satisfy (\ref{cond1})-(\ref{eq:sepa}). Then,  
\begin{equation*}
\int_{\Omega} |\nabla_x G_\Omega(x,y)|^{1+\kappa}\, dx\leq  C(n, \lambda,\Lambda,\rho) ~\text{for all } y\in\Omega
\end{equation*}
(ii) Assume $\Omega$ and $\phi$  satisfy  \eqref{om_ass}--\eqref{eq_u1}. If  $A= \Omega\cap B_\delta(0)$ where 
$\delta\leq c_{\ast}$, then
\begin{equation*}
\int_{A} |\nabla_x G_A(x,y)|^{1+\kappa}\, dx\leq  C(n, \lambda,\Lambda,\rho)~\text{for all } y\in A.
\end{equation*}
(iii) Let $V$ be either $\Omega$ as in (i) or $A$ as in (ii).  Let $\bar \kappa \in (0, \frac{1}{n-1})$. Then there is a positive number $\gamma (n,\bar \kappa)>0$ such that if 
$$|\Lambda/\lambda-1|<\gamma$$
then
\begin{equation}
\label{GVn_ineq}
\int_{V} |\nabla_x G_V(x,y)|^{1+\bar \kappa}\, dx\leq  C(n, \lambda,\Lambda, \bar\kappa, \rho) ~\text{for all } y\in V.
\end{equation}
\end{theorem}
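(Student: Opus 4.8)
The plan is to prove all three parts of Theorem~\ref{DG_lem} by one mechanism, which I describe for part~(i) (so $V=\Omega$) and then adapt. The key is to exploit the divergence structure of $\mathcal L_\phi$ to obtain a \emph{weighted} energy bound for $G:=G_\Omega(\cdot,y)$, and then to convert it, by H\"older's inequality, into an $L^{1+\kappa}$ bound for $\nabla G$ using two integrability inputs: the global high integrability of $G$ itself from \cite[inequality (5.12)]{LN3}, and the global $W^{2,1+\e_0}$ estimate for the Monge--Amp\`ere equation under \eqref{cond1}--\eqref{eq:sepa}. Throughout we may take $\phi,\Omega$ smooth, so $G$ is smooth in $\Omega\setminus\{y\}$ and, by the interior ball condition in \eqref{cond1}, Lipschitz up to $\p\Omega$ where it vanishes; also $\nabla G$ is unchanged if $\phi$ is normalized by an affine function.

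\emph{Step 1 (weighted energy identity).} Since $\p_i\Phi^{ij}=0$ we have $\mathcal L_\phi v=-\p_i(\Phi^{ij}v_j)$, hence $\int_\Omega \Phi^{ij}\p_jG\,\p_i\eta\,dx=\eta(y)$ for every Lipschitz $\eta$ vanishing on $\p\Omega$. I would fix a small $\delta>0$ and use the bounded increasing Lipschitz functions $\eta_M:=1-(1+\min(G,M))^{-\delta}$; these vanish on $\p\Omega$, and $\eta_M(y)=1-(1+M)^{-\delta}$ since $G(y)=+\infty$, so letting $M\to\infty$ and using monotone convergence yields
\begin{equation}\label{wei}
\int_\Omega (1+G)^{-1-\delta}\,\Phi^{ij}\p_iG\,\p_jG\,dx=\frac1\delta .
\end{equation}
The truncation in $M$ is what makes this rigorous, since $G$ is not itself an admissible test function at its pole.

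\emph{Step 2 (H\"older).} As $\phi$ is convex with $\det D^2\phi\in[\lambda,\Lambda]$, the matrix $\Phi$ is positive definite with $\lambda_{\min}(\Phi)^{-1}=\lambda_{\max}(D^2\phi)/\det D^2\phi\le\lambda_{\max}(D^2\phi)/\lambda$, so pointwise
$$|\nabla G|^{1+\kappa}\le \lambda^{-\frac{1+\kappa}2}\Bigl[(1+G)^{-1-\delta}\Phi^{ij}\p_iG\,\p_jG\Bigr]^{\frac{1+\kappa}2}(1+G)^{\frac{(1+\delta)(1+\kappa)}2}\bigl(\lambda_{\max}(D^2\phi)\bigr)^{\frac{1+\kappa}2}.$$
Integrating over $\Omega$ and applying H\"older with the three exponents $\tfrac2{1+\kappa},b,c$ (so $\tfrac{1+\kappa}2+\tfrac1b+\tfrac1c=1$): the first resulting factor equals $\delta^{-(1+\kappa)/2}$ by \eqref{wei}; the second is finite provided $\tfrac{(1+\delta)(1+\kappa)b}2$ stays below the integrability exponent of $G$, which by \cite[inequality (5.12)]{LN3} may be taken to be any number $<\tfrac n{n-2}$ (any number if $n=2$), with bound depending only on $n,\lambda,\Lambda,\rho$ and the exponent; the third is finite provided $\tfrac{(1+\kappa)c}2\le 1+\e_0$, where $\e_0=\e_0(n,\lambda,\Lambda)>0$ is the exponent in the global $W^{2,1+\e_0}$ estimate for $\det D^2\phi\in[\lambda,\Lambda]$ under \eqref{cond1}--\eqref{eq:sepa} (so that $\lambda_{\max}(D^2\phi)\in L^{1+\e_0}(\Omega)$; this combines the interior estimate of De~Philippis--Figalli with the boundary regularity furnished by the quadratic separation). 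Taking $b$ as large and $c$ as small as these permit and letting $\delta\to0$, one checks that admissible exponents exist exactly when $\kappa<\tfrac{1-A}{1+A}$ with $A=\tfrac{n-2}n+\tfrac1{1+\e_0}$; since $\e_0>0$ forces $A<\tfrac{2n-2}n$, we get $\tfrac{1-A}{1+A}>\tfrac{2-n}{3n-2}$, so fixing any $\kappa\in\bigl(\tfrac{2-n}{3n-2},\tfrac{1-A}{1+A}\bigr)$, which depends only on $\e_0$ and hence on $n,\lambda,\Lambda$, gives $\int_\Omega|\nabla G|^{1+\kappa}\,dx\le C(n,\lambda,\Lambda,\rho)$ uniformly in $y$. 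This is part~(i).

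\emph{Parts (ii), (iii), and the main obstacle.} Part (ii) is the same argument with $V=A=\Omega\cap B_\delta(0)$, using the localized form of \cite[inequality (5.12)]{LN3} — whence the requirement $\delta\le c_\ast$ — and the $W^{2,1+\e_0}$ estimate for $\phi$ near the flat portion $\p\Omega\cap B_\rho(0)$ afforded by Savin's boundary localization \cite{S2} under \eqref{om_ass}--\eqref{eq_u1}; taking the smaller of the two $W^{2,1+\e_0}$ exponents lets one common $\kappa$ serve in (i) and (ii). For (iii), when $|\Lambda/\lambda-1|<\gamma$ the density $\det D^2\phi$ has oscillation $O(\gamma)$, so by the perturbation theory for the Monge--Amp\`ere equation the $W^{2,1+\e_0}$ estimate above holds with $\e_0=\e_0(\gamma)\to\infty$ as $\gamma\to0$; then $A\to\tfrac{n-2}n$ and $\tfrac{1-A}{1+A}\to\tfrac1{n-1}$, so given $\bar\kappa\in(0,\tfrac1{n-1})$ it suffices to fix $\gamma=\gamma(n,\bar\kappa)$ small enough that $\bar\kappa<\tfrac{1-A}{1+A}$, and then \eqref{GVn_ineq} follows from the chain of inequalities in Step~2. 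I expect the real work to lie not in this bookkeeping but in the two external inputs holding \emph{up to the boundary} — the uniform high integrability of the Green's function (which rests on the boundary Harnack inequality for $\mathcal L_\phi$ and Savin's boundary localization for sections) and the global $W^{2,1+\e_0}$ bound for the Monge--Amp\`ere equation under \eqref{cond1}--\eqref{eq:sepa}; granting those, the only remaining care is the truncation justification of \eqref{wei}.
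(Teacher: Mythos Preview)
Your proof is correct and rests on exactly the same two external inputs the paper uses: the global $W^{2,1+\e_0}$ estimate for $\phi$ under \eqref{cond1}--\eqref{eq:sepa} (resp.\ \eqref{om_ass}--\eqref{eq_u1}), and the global $L^q$ integrability of the Green's function from \cite{LN3,Le15} for $q<\tfrac n{n-2}$. The organization differs in one place. The paper truncates at level sets: testing with $-(G-k)^{-}+k$ gives $\int_{\{G\le k\}}\Phi^{ij}G_iG_j\,dx=k$, whence $\int_{\{G\le k\}}|\nabla G|^p\le Ck^{p/2}$ for $p=\tfrac{2+2\e}{2+\e}$; combining this with $|\{G\ge k\}|\le C_q k^{-q}$ (Chebyshev on the $L^q$ bound) and optimizing in $k$ yields a weak-type estimate for $|\nabla G|$, and then the layer-cake formula gives $|\nabla G|\in L^{1+\kappa}$ for $1+\kappa<\tfrac{2pq}{p+2q}$. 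You instead test with a smooth weight $1-(1+\min(G,M))^{-\delta}$ to obtain a weighted energy identity on all of $V$, and then apply a single three-factor H\"older inequality. A short computation shows the two routes produce the \emph{identical} threshold $\tfrac{2-n}{3n-2}<\kappa<\tfrac{1-A}{1+A}$ with $A=\tfrac{n-2}{n}+\tfrac1{1+\e_0}$, and your treatment of part~(iii) matches the paper's. One phrasing point: you should not literally let $\delta\to0$ (the constant $\delta^{-(1+\kappa)/2}$ blows up); rather, for each fixed $\kappa$ strictly below the threshold one \emph{chooses} a small but fixed $\delta>0$ so that admissible H\"older exponents $b,c$ exist. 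With that adjustment your argument is complete, and arguably slightly more streamlined than the level-set/layer-cake version.
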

\begin{rem}
\begin{myindentpar}{1cm} 
(i) In two dimensions, Theorem \ref{DG_lem} establishes the global $W^{1,1+\kappa}$ estimates ($\kappa>0$) for the Green's function of the linearized Monge-Amp\`ere operator $\Phi^{ij}\p_{ij}$
when the Monge-Amp\`ere measure $\det D^2\phi$ is only bounded away from zero and infinity. \\
(ii) On the other hand, in any dimension $n\geq 2$, Theorem \ref{DG_lem} establishes the global $W^{1,1 + \kappa}$ estimates 
for any $1+\kappa$ close to $\frac{n}{n-1}$
 for the Green's function of the linearized Monge-Amp\`ere operator $\Phi^{ij}\p_{ij}$
when the Monge-Amp\`ere measure $\det D^2\phi$ is close to a constant. In other words, if the Monge-Amp\`ere measure $\det D^2\phi$ is continuous then the Green's function of the linearized Monge-Amp\`ere operator $\Phi^{ij}\p_{ij}$
has the same global integrability, up to the first order derivatives, as the Green's function of the Laplace operator.
\end{myindentpar}
\end{rem}
\begin{proof} [Proof of Theorem \ref{DG_lem}] We first prove (i) and (ii).
Let $V$ be either $\Omega$ as in (i) or $A$ as in (ii) of the theorem.  
We need to show that
\begin{equation}
\label{GV_ineq}
\int_{V} |\nabla_x G_V(x,y)|^{1+\kappa}\, dx\leq  C(n, \lambda,\Lambda,\rho) ~\text{for all } y\in V~\text{and for some } \kappa (n, \lambda,\Lambda)>\frac{2-n}{3n-2}.
\end{equation}

{\it Step 1:} We first assert that for some $\e_\ast (n,\frac{\Lambda}{\lambda})>0$,
\begin{equation}
\label{W2V}
\int_{V} |D^2\phi|^{1+\e_\ast} dx \leq C(n, \lambda,\Lambda,\rho).
\end{equation}
Indeed, by De Philippis-Figalli-Savin's and Schmidt's $W^{2, 1+\e}$ estimates for the Monge-Amp\`ere equation \cite{DPFS, Sch} (see also \cite[Theorem 4.36]{Fi}), there exists $ \e_\ast(n,\frac{\Lambda}{\lambda})>0$ 
such that
$D^2 \phi\in L^{1+\e_\ast}_{loc}(\Omega)$. 
Using Savin's technique in his proof of the global $W^{2,p}$ estimates \cite{S3} for the Monge-Amp\`ere equation, we can show that (see also \cite[Theorem 5.3]{Fi}): If $\Omega$ and $\phi$  satisfy (\ref{cond1})-(\ref{eq:sepa}), then
$$\int_{\Omega} |D^2\phi|^{1+\e_\ast} dx \leq C(n, \lambda,\Lambda,\rho);$$
and if $\Omega$ and $\phi$  satisfy  \eqref{om_ass}--\eqref{eq_u1}, then
$$\int_{A} |D^2\phi|^{1+\e_\ast} dx \leq C(n, \lambda,\Lambda,\rho).$$
In all cases, we have (\ref{W2V}) as asserted.

Let $0<\e<\e_\ast$ be any positive number depending on $n$ and $\frac{\Lambda}{\lambda}$ and let
\begin{equation}
\label{pdef}
p= \frac{2+ 2\e}{2+\e}\in (1,2).
\end{equation}
Fix $y\in V$. 
Let $v(x):= G_V(x, y)$.
Let $k>0$. We use $v_j$ to denote the partial derivative $\p v/\p x_j$.

{\it Step 2:}  We have the following integral estimate
\begin{equation}
\label{keq}
\int_{\{x\in V: v(x)\leq k\}} \Phi^{ij} v_i v_j dx= k.
\end{equation}
To prove (\ref{keq}),
we use the truncation function $ w= -(v-k)^{-} + k$ to avoid the singularity of $v$ at $y$. By the assumption on the smoothness of $\phi$, $v$ is smooth away $y$. Thus $v\in W^{1, q}_{loc}(V\setminus \{y\})$ for all $q$.
Note that $w= v$ on $\{x\in V: v(x)\leq k\}$ while $w=k$ on $\{x\in V:v(x)\geq k\}$. 
Thus, $w\in W^{1, q}_0 (V)$ for all $q$.
Moreover, from the definition of  $v(x)= G_V(x, y)$, we find that
$$\int_V \Phi^{ij} v_i w_j dx=w(y)= k.$$
Using that $w_j= v_j$ on $\{x\in V: v(x)\leq k\}$ while $w_j=0$ on $\{x\in V: v(x)\geq k\}$, we obtain (\ref{keq}).

{\it Step 3:} We next claim that
\begin{equation}
\label{pk_est}
\int_{\{x\in V:v(x)\leq k\}} |Dv|^p dx\leq C(n, \lambda,\Lambda,\rho) k^{p/2}.
\end{equation}

To prove (\ref{pk_est}), we use (\ref{keq})
together with the arguments in \cite{L, LCMP}.
For completeness, we include its short proof here. Let $$S=\{x\in V: v\leq k\}. $$
We will use the following inequality
$\Phi^{ij} v_{i}(x) v_{j}(x)  \geq \frac{\det D^2 \phi |\nabla v|^2}{\Delta \phi}$
whose simple proof can be found in \cite[Lemma 2.1]{CG97}. 
It follows from (\ref{keq}) and $\det D^2\phi\geq\lambda$ that
$$\int_S \frac{|\nabla v|^2}{\Delta\phi}dx\leq \lambda^{-1} k.$$
Now, since $1<p<2$, using the H\"older inequality to $|\nabla v|^p= \frac{|\nabla v|^p}{(\Delta \phi)^{\frac{p}{2}}}  \left((\Delta\phi)^{\frac{p}{2}} \right)$
with exponents $\frac{2}{p}$ and $\frac{2}{2-p}$, we have
\begin{eqnarray}
\|\nabla v\|_{L^p(S)} \leq \left[\int_S \frac{|\nabla v|^2}{\Delta\phi}dx\right]^{\frac{1}{2}} \left(\int_S(\Delta \phi)^{\frac{p}{2-p}} 
dx\right)^{\frac{2-p}{2p}} \leq \lambda^{-1/2}k^{1/2} \|(\Delta\phi)\|_{L^{\frac{p}{2-p}}(S)}^{\frac{1}{2}}.
\label{Lp_est}
\end{eqnarray}
Applying (\ref{Lp_est}) to $p$ defined in (\ref{pdef}), noting that 
$ \frac{p}{2-p}= 1+\e,$ and recalling (\ref{W2V}), we obtain
\begin{eqnarray*}\|\nabla v\|_{L^p(S)} \leq \lambda^{-1/2}k^{1/2}\|(\Delta\phi)\|^{\frac{1}{2}}_{L^{1+\e}(S)}
\leq  \lambda^{-1/2} k^{1/2}\|\Delta\phi\|^{\frac{1}{2}}_{L^{1+\e_\ast}(S)} |S|^{\frac{\e_\ast-\e}{2(1+\e_\ast)(1+\e)}} 
\leq k^{1/2}C(n, \lambda,\Lambda,\rho) .
\end{eqnarray*}
The proof of (\ref{pk_est}) is complete.

{\it Step 4:} For any $1<q<\frac{n}{n-2},$ we have
 \begin{equation}\|v\|_{L^q(S)}\leq  \|v\|_{L^q(V)}\leq C(n, \lambda,\Lambda, \rho, q) .
  \label{Glq}
 \end{equation}
Indeed, let $q':=\frac{q}{q-1}$. 
If $\Omega$ and $\phi$  satisfy  \eqref{om_ass}--\eqref{eq_u1} and if $A= \Omega\cap B_\delta(0)$ where $\delta\leq c_{\ast}$ then 
estimate (5.12) in \cite{LN3} gives
\begin{eqnarray*}
\int_{A}G_{A}^q(x, y) dx =\int_{A}G_{A}^q(y, x) dx \leq C(n, \lambda,\Lambda,\rho, q) |A|^{\frac{3}{4} (\frac{2}{n}-\frac{1}{q'})} \leq   C(n, \lambda,\Lambda,\rho, q).
\end{eqnarray*}
On the other hand, if $\Omega$ and $\phi$  satisfy (\ref{cond1})-(\ref{eq:sepa}), then
by Corollary 2.6 in \cite{Le15}, we have
$$\int_{\Omega}G_{\Omega}^q(x, y) dx =\int_{\Omega}G_{\Omega}^q(y, x) dx \leq C(n, \lambda,\Lambda,\rho, q).$$
From the preceding estimates, we obtain (\ref{Glq}).

 As a consequence of (\ref{Glq}) and Chebyshev's inequality, we have
 \begin{equation}
 \label{Cheb}
 |\{x\in V: v(x)\geq k\}|\leq \frac{C_q(n, \lambda,\Lambda, \rho)}{k^{q}}.
 \end{equation}

{\it Step 5:} Now, we pass from the truncation of level $k$ to global estimates.
For any $\eta>0$, we have
\begin{eqnarray*}
\{x\in V: |\nabla v(x)|\geq \eta\}\subset \{x\in V: v\geq k\} \cup \{x\in V: |\nabla v(x)|\geq \eta; v(x)\leq k \}.
\end{eqnarray*}
By using (\ref{Cheb}) and (\ref{pk_est}), we obtain
\begin{eqnarray*}
|\{x\in V: |\nabla v(x)|\geq \eta\}| \leq \frac{C_q}{ k^{q}} + \int_{\{x\in V:v(x)\leq k\}} \frac{|\nabla v|^p}{\eta^{p}} dx\leq  \frac{C_q}{ k^{q}}  +  \frac{C_p k^{p/2}}{\eta^{p}}. 
\end{eqnarray*}
We choose $k$ such that $\eta^{p}= k^{\frac{p}{2} + q}$ or $k= \eta^{\frac{2p}{p+2q}}$. Then
\begin{equation*}
|\{x\in V: |\nabla v(x)|\geq \eta\}|  \leq \frac{C'_q}{k^q} =\frac{C_q^{'}}{\eta^{\frac{2pq}{p+2q}}}.
\end{equation*}
It follows from the layer cake representation that $|Dv|\in L^{1+\kappa}(V)$ for any $\kappa\in\R$ with $1+\kappa<\frac{2pq}{p+2q}.$
The proof of (\ref{GV_ineq}) will be complete if we can choose a suitable $1<q<\frac{n}{n-2}$ to make $\frac{2pq}{p+2q}> \frac{2n}{3n-2}$ so as to choose $\kappa>\frac{2-n}{3n-2}$ in the above inequality. This is possible,
since $$\lim_{q\rightarrow \frac{n}{n-2}}\frac{2pq}{p+ 2q}= \frac{2pn}{(n-2) p + 2n}> \frac{2n}{3n-2}$$ where the last inequality follows from $p>1$. In conclusion, we can find
$\kappa(n, \lambda,\Lambda)>\frac{2-n}{3n-2}$ such that 
\begin{equation*}
\int_{V} |\nabla_x G_V(x,y)|^{1+\kappa}\, dx\leq  C(n, \lambda,\Lambda,\rho).
\end{equation*}
The proof of (\ref{GV_ineq}) is complete.

Finally, we prove (iii). Let $\bar \kappa \in (0, \frac{1}{n-1})$.  From {\it Step 5} above, we find that,  in order to have (\ref{GVn_ineq}), it suffices to choose $\lambda$ and $\Lambda$ such that 
\begin{equation}
\label{pc1}
\frac{2pn}{(n-2) p + 2n}>1+\bar \kappa
\end{equation}
for some $$p= \frac{2+2\e}{2+\e} (\text{so that }\e = \frac{2p-2}{2-p})$$ where $0<\e<\e_\ast(n,\frac{\Lambda}{\lambda})$ with $\e_\ast$ as in {\it Step 1}. 

A direct calculation shows that (\ref{pc1}) holds as long as 
$2>p>p_0$
where 
$$p_0:= \frac{2n(1+\bar\kappa)}{2n-(1+\bar \kappa)(n-2)}<2.$$
The last inequality is due to the fact that $\bar \kappa \in (0, \frac{1}{n-1})$. Thus, we need to choose $\lambda$ and $\Lambda$ such that 
$$\e_\ast>\frac{2p_0-2}{2-p_0}.$$
This is always possible if 
$|\Lambda/\lambda-1|<\gamma$
for some small positive number $\gamma= \gamma(p_0, n)=\gamma (n,\bar\kappa)$; see \cite[Theorem 5.3]{Fi}.
\end{proof}

\section{$L^{\infty}$ bounds and H\"older estimates at the boundary}

\label{L_sec}
 
In this section we establish $L^{\infty}$ bounds in Lemma \ref{global-ball} and H\"older estimates at the boundary in Proposition \ref{local-H} for solutions to (\ref{bdr_div}).
Let $c_\ast=c_{\ast}(\lambda,\Lambda,\rho)>0$ be as in Section \ref{G_sec}.

As a consequence of Theorem \ref{DG_lem}, we first have the following global estimates for solutions to inhomogeneous
linearized Monge--Amp\`ere equations (\ref{bdr_div}) in two dimensions.

 \begin{lemma}\label{global-ball} Assume that $n=2$.  Consider the following settings:
 \begin{myindentpar}{1cm}
 (i) Assume that $\Omega$ and $\phi$  satisfy (\ref{cond1})-(\ref{eq:sepa}).\\
 (ii) Assume $\Omega$ and $\phi$  satisfy  \eqref{om_ass}--\eqref{eq_u1}.  Let
$A= \Omega\cap B_\delta(0)$ where 
$\delta\leq c_\ast$.
\end{myindentpar}
Let $V$ be either $\Omega$ as in (i) or A as in (ii). Assume that $F\in L^{\infty}(V)$  and  $u\in W^{2,n}_{loc}(V)\cap C(\overline{V})$ satisfies
\[
\mathcal{L}_{\phi} u\leq \div F\quad \mbox{almost everywhere in}\quad V.
\]
Then  there exist positive constants $\kappa_2 (\lambda,\Lambda)$ and $C(\lambda,\Lambda,\rho)$ such that
\[
\sup_{V}{u} \leq \sup_{\partial V}{u^+} + C |V|^{\kappa_2}   \|F\|_{L^{\infty}(V)}. 
\]
\end{lemma}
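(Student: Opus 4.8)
The plan is to run a Stampacchia-type iteration on the level sets of $u$, using the Green's function of $\mathcal{L}_\phi$ on $V$ as a test object (its gradient being controlled in $L^{1+\kappa}$ by Theorem \ref{DG_lem}) to absorb the divergence-form right-hand side. First I would reduce to the case $\sup_{\partial V} u^+ = 0$ by replacing $u$ with $u - \sup_{\partial V} u^+$; since $\mathcal{L}_\phi$ annihilates constants, the differential inequality is preserved. Then $u^+ \in W^{1,q}_{\mathrm{loc}}(V) \cap C(\overline V)$ vanishes on $\partial V$, and for each level $k \ge 0$ the truncation $v_k := (u-k)^+$ lies in $W^{1,q}_0(\{u > k\} \cap V)$ (extended by zero). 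Testing $\mathcal{L}_\phi u \le \operatorname{div} F$ against $v_k$ and integrating by parts, using that $(\Phi^{ij})$ is divergence-free so that $\mathcal{L}_\phi$ is in divergence form $-\partial_i(\Phi^{ij}u_j)$, yields
\begin{equation*}
\int_{A_k} \Phi^{ij} \partial_i v_k\, \partial_j v_k\, dx \le \int_{A_k} F \cdot \nabla v_k\, dx \le \|F\|_{L^\infty(V)} \int_{A_k} |\nabla v_k|\, dx,
\end{equation*}
where $A_k := \{x \in V : u(x) > k\}$.

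Next I would estimate the left-hand side from below. Representing $v_k$ via the Green's function of $\mathcal{L}_\phi$ on $V$ (or, more robustly in two dimensions, via the pointwise bound $\|v_k\|_{L^\infty} = \sup_{A_k}(u-k)$ together with a Green's-function representation of the value at a near-maximizing point), one obtains a lower bound of the form $\big(\sup_{A_k} v_k\big)^2 \lesssim \int_{A_k} \Phi^{ij}\partial_i v_k \partial_j v_k\, dx$, or more precisely an $L^\infty$–to–energy comparison that turns the energy estimate into a bound on $M(k):= \sup_V (u-k)^+$. Combining with Hölder's inequality on the gradient term — here is where Theorem \ref{DG_lem} enters, giving $\int_{A_k}|\nabla v_k|\,dx \le \big(\int_V |\nabla G_V|^{1+\kappa}\big)^{1/(1+\kappa)}|A_k|^{\kappa/(1+\kappa)}$ or a direct interpolation using $\int_{A_k}|\nabla v_k|^{1+\kappa} \le C|A_k|^{\cdots}$ derived exactly as in Steps 3–5 of the proof of Theorem \ref{DG_lem} — I arrive at a decay inequality
\begin{equation*}
(h-k)\,|A_h| \le M(k)\,|A_h| \le C\,\|F\|_{L^\infty(V)}\,|A_k|^{1+\sigma}, \qquad h > k \ge 0,
\end{equation*}
for some $\sigma = \sigma(\lambda,\Lambda) > 0$ coming from the exponent $1+\kappa > \frac{2n}{3n-2} = 1$ (so $\kappa > 0$, giving $\sigma > 0$) in dimension $n=2$.

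With such an inequality in hand, the standard Stampacchia lemma (e.g.\ Kinderlehrer–Stampacchia, or \cite{MuSt,Tr}) gives that $|A_k| = 0$ for $k \ge k_0 := C\,\|F\|_{L^\infty(V)}\,|A_0|^{\sigma}$, and since $|A_0| \le |V|$ this reads $\sup_V u \le C\,|V|^{\kappa_2}\|F\|_{L^\infty(V)}$ with $\kappa_2 = \sigma$ depending only on $\lambda,\Lambda$ and the constant $C$ depending on $\lambda,\Lambda,\rho$; undoing the initial normalization restores the $\sup_{\partial V} u^+$ term. I expect the main obstacle to be the lower bound on the quadratic form $\int_{A_k}\Phi^{ij}\partial_i v_k\partial_j v_k$ in terms of $M(k)$: because $\mathcal{L}_\phi$ is both degenerate and singular, one cannot use a uniform Sobolev inequality, and the argument must instead route through the Green's function representation on the sublevel/superlevel set, which is precisely why the global $W^{1,1+\kappa}$ bound of Theorem \ref{DG_lem} — rather than classical De Giorgi–Nash–Moser machinery — is the decisive input. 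A secondary technical point is justifying the integration by parts and the use of $v_k$ as a test function near the interior pole-type singularities and on the non-smooth set $\partial A_k$; this is handled by the same truncation device used in Step 2 of the proof of Theorem \ref{DG_lem}, together with $u \in W^{2,n}_{\mathrm{loc}}(V)$.
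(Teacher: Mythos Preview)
Your approach is genuinely different from the paper's, and it contains a real gap.

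The paper's proof is a short, direct Green's function argument: define $v(x)=\int_V G_V(x,y)\,\div F(y)\,dy$, which solves $\mathcal{L}_\phi v=\div F$ in $V$ with $v=0$ on $\partial V$; compare $u$ to $v$ via the ABP maximum principle to get $u\le \sup_{\partial V}u^+ + v$; then use the symmetry $G_V(x,y)=G_V(y,x)$ and integrate by parts in $y$ to obtain $v(x)=-\int_V \nabla_y G_V(y,x)\cdot F(y)\,dy$, so that H\"older and Theorem~\ref{DG_lem} give $v(x)\le C|V|^{\kappa_2}\|F\|_{L^\infty}$ with $\kappa_2=1-\frac{1}{1+\kappa}$. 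No iteration is needed.

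Your Stampacchia scheme, by contrast, does not close. The line
\[
\int_{A_k}|\nabla v_k|\,dx \le \Big(\int_V |\nabla G_V|^{1+\kappa}\Big)^{1/(1+\kappa)}|A_k|^{\kappa/(1+\kappa)}
\]
is simply wrong as written: $v_k=(u-k)^+$ has nothing to do with $G_V$, so Theorem~\ref{DG_lem} gives no control on $\nabla v_k$. If instead you follow the ``Steps 3--5'' route you mention, the energy inequality $\int_{A_k}\Phi^{ij}\partial_i v_k\partial_j v_k\le \|F\|_\infty\int_{A_k}|\nabla v_k|$ together with $\Phi^{ij}\xi_i\xi_j\ge \lambda|\xi|^2/\Delta\phi$ and H\"older yield only $\|\nabla v_k\|_{L^p(A_k)}\le C\|F\|_\infty|A_k|^{1-1/p}$ with $p=\tfrac{2+2\varepsilon}{2+\varepsilon}$ and $\varepsilon<\varepsilon_\ast$. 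Feeding this into the standard Sobolev embedding in $\R^2$ gives a decay relation $|A_h|\le C(h-k)^{-p^\ast}|A_k|^{p^\ast(1-1/p)}$, and the Stampacchia lemma requires the exponent $p^\ast(1-1/p)=\tfrac{2(p-1)}{2-p}$ to exceed $1$, i.e.\ $p>4/3$, i.e.\ $\varepsilon>1$. But only $\varepsilon_\ast>0$ small is known from the De Philippis--Figalli--Savin/Schmidt estimates, so the iteration does not terminate.

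The ``lower bound of the form $(\sup_{A_k}v_k)^2\lesssim \int_{A_k}\Phi^{ij}\partial_i v_k\partial_j v_k$'' you propose would be a weighted $L^\infty$--Sobolev inequality for the degenerate matrix $\Phi$; no such inequality is available here, as you yourself note. And your fallback---``route through the Green's function representation on the sublevel/superlevel set''---cannot use Theorem~\ref{DG_lem} on $A_k$ (which is an arbitrary superlevel set satisfying none of the structural hypotheses \eqref{cond1}--\eqref{eq:sepa} or \eqref{om_ass}--\eqref{eq_u1}); if instead you use $G_V$ on $V$ at a single near-maximizing point, you are doing exactly the paper's argument and the level-set machinery is superfluous. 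The decisive observation you are missing is that the symmetry of $G_V$ converts the $\nabla_x$-estimate of Theorem~\ref{DG_lem} into a $\nabla_y$-estimate, which is precisely what is needed to absorb $\div F$ after one integration by parts.
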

\begin{proof}
Let $\kappa=\kappa(\lambda,\Lambda)>0$ be as in Theorem \ref{DG_lem}. Set
$$\kappa_2:= 1-\frac{1}{1+\kappa}>0.$$
Using H\"older inequality to the estimates in Theorem \ref{DG_lem}, we find $C(\lambda,\Lambda,\rho)>0$ such that
\begin{equation}
\label{DG_L1}
\int_{V} |\nabla_x G_V(x,y)|\, dx\leq  C(\lambda,\Lambda,\rho) |V|^{\kappa_2} .
\end{equation}
Let $G_V(x,y)$ be the Green's function of $\mathcal{L}_{\phi}$ in $V$ with pole $y\in V$.
Define
\[
v(x) := \int_{V} G_V(x,y) \div F(y)\, dy \quad\mbox{for}\quad x\in V.
\]
Then $v$ is a solution of 
$$\mathcal{L}_{\phi} v =\div F \mbox{ in }V,~\text{and}~
v =0\mbox{ on } \partial V.$$
Since $\mathcal{L}_{\phi}(u-v)\leq 0$ in $V$, we obtain from the Aleksandrov-Bakelman-Pucci (ABP) maximum principle (see \cite[Theorem~9.1]{GiT}) that
\begin{equation}\label{comp_prin}
u(x) \leq \sup_{\partial V}{u^+} + v(x)\quad \mbox{in} \quad V.
\end{equation}

As the operator $\mathcal{L}_\phi$ can be written in the divergence form with symmetric coefficient, we  infer 
 from \cite[Theorem~1.3]{GW}
that $G_V(x,y)=G_V(y,x)$ for all $x,y\in V$. Thus, using (\ref{DG_L1}), we can estimate for all $x\in V$
\begin{eqnarray}
v(x)&=&\int_{V} G_V(x,y) \div F(y)\, dy=\int_{V} G_V(y,x) \div F(y)\, dy\nonumber\\
&=& -\int_{V} \nabla_y G_V(y,x) F(y)\, dy\leq  C(\lambda,\Lambda,\rho) |V|^{\kappa_2}   \|F\|_{L^{\infty}(V)}.
\label{v_est}
\end{eqnarray}
The desired estimate follows from (\ref{comp_prin}) and (\ref{v_est}).
\end{proof}

Next, we obtain the following H\"older estimates at the boundary for solutions to inhomogeneous
linearized Monge--Amp\`ere equations (\ref{bdr_div}) in two dimensions.

\begin{proposition}  \label{local-H}
Assume $\Omega$ and $\phi$  satisfy  \eqref{om_ass}--\eqref{eq_u1}. 
Assume that $n=2$. Let $\kappa_2$ be as in Lemma \ref{global-ball}.
Let $u \in C\big(B_{\rho}(0)\cap 
\overline{\Omega}\big) \cap W^{2,n}_{loc}(B_{\rho}(0)\cap 
\Omega)$  be a  solution to 
\begin{equation*}
 \left\{
 \begin{alignedat}{2}
   \Phi^{ij}u_{ij} ~& = \div F ~&&\text{ in } ~ B_{\rho}(0)\cap \Omega, \\\
u &= \varphi~&&\text{ on }~\p \Omega \cap B_{\rho}(0),
 \end{alignedat} 
  \right.
\end{equation*} 
where $\varphi\in C^{\alpha}(\partial\Omega\cap B_{\rho}(0))$ for some $\alpha\in (0,1)$ and $F\in L^{\infty}(\Omega\cap B_\rho(0))$.  Let
 $$\alpha_0 :=\min\big\{\alpha, \kappa_2\big\}.$$
Then, there exist positive constants $\delta $ and $C$ depending only $\lambda, \Lambda, \alpha, \rho$ 
such that, for any $x_{0}\in\partial\Omega\cap B_{\rho/2}(0)$  and for all $x\in \Omega\cap B_{\delta}(x_{0})$, we have
$$|u(x)-u(x_{0})|\leq C|x-x_{0}|^{\frac{\alpha_0}{\alpha_0 +3n}} \Big(
\|u\|_{L^{\infty}(\Omega\cap B_{\rho}(0))} + \|\varphi\|_{C^\alpha(\partial\Omega\cap B_{\rho}(0))}  
+ \|F\|_{L^{\infty}(\Omega\cap B_{\rho}(0))} \Big). $$
\end{proposition}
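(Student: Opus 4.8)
The plan is to establish, for each boundary point $x_0\in\p\Omega\cap B_{\rho/2}(0)$, a geometric decay of $\operatorname*{osc}_{S_h}u$ over the sections $S_h:=S_\phi(x_0,h)$ as $h\downarrow 0$, and then convert this into the asserted pointwise H\"older bound. After subtracting constants we may assume $u(x_0)=\varphi(x_0)=0$; replacing $\phi$ by $\bar\phi(x):=\phi(x)-\phi(x_0)-D\phi(x_0)\cdot(x-x_0)\ge 0$ leaves $\Phi$ and $\mathcal L_\phi$ unchanged, and $S_h=\{\bar\phi<h\}$. Under \eqref{om_ass}--\eqref{eq_u1} the boundary sections are well behaved for $h\le h_0$ universal: the quadratic separation \eqref{eq_u1}, the interior tangent ball condition \eqref{tang-int} and $\lambda\le\det D^2\phi\le\Lambda$ give $S_h\subset\overline\Omega\cap B_{C\sqrt h}(x_0)$ with $|S_h|\le C h^{n/2}$, the sections are balanced and engulfing, and a crude convexity bound for $\bar\phi$ near $x_0$ coming from \eqref{eq_u1} gives $\{x\in\overline\Omega:\ |x-x_0|<c\,h^{N}\}\subset S_h$ for a universal exponent $N\ge 1$. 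These are the standard facts on boundary sections of Monge--Amp\`ere solutions with quadratic separation, and everything below remains valid, with comparable constants, for $x_0\in\p\Omega\cap B_{\rho/2}(0)$ rather than at the origin.

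The heart of the matter is the claim that there are universal $\theta\in(0,1)$, $h_0>0$, $\mu_0\in(0,1)$ and $\beta>0$ (comparable to $\alpha_0$) such that for $0<h\le h_0$
\[
\operatorname*{osc}_{S_{\theta h}}u\ \le\ (1-\mu_0)\operatorname*{osc}_{S_{h}}u\ +\ C h^{\beta}\big(\|\varphi\|_{C^\alpha(\p\Omega\cap B_\rho)}+\|F\|_{L^\infty(\Omega\cap B_\rho)}\big).
\]
To prove it one rescales $S_h$ to unit (or rather $c_\ast$-)size: with $x=x_0+\sqrt h\,y$ the functions $\tilde u(y)=u(x_0+\sqrt h y)$, $\tilde\phi(y)=h^{-1}\bar\phi(x_0+\sqrt h y)$ and $\tilde F(y)=\sqrt h\,F(x_0+\sqrt h y)$ satisfy $\tilde\Phi^{ij}\tilde u_{ij}=\div\tilde F$ on a normalized domain $\tilde S$ comparable to a half-ball, still satisfying \eqref{om_ass}--\eqref{eq_u1} with $\lambda\le\det D^2\tilde\phi\le\Lambda$, and $\|\tilde u\|_{L^\infty(\tilde S)}=\|u\|_{L^\infty(S_h)}$, $[\tilde\varphi]_{C^\alpha}\le C h^{\alpha/2}[\varphi]_{C^\alpha}$, $\|\tilde F\|_{L^\infty}\le C h^{1/2}\|F\|_{L^\infty}$. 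On $\tilde S$ split $\tilde u=\tilde v+\tilde w$, where $\tilde v$ solves $\tilde\Phi^{ij}\tilde v_{ij}=\div\tilde F$ in $\tilde S$ with $\tilde v=0$ on $\p\tilde S$; by the affine‑invariant form of Lemma~\ref{global-ball} applied on $\tilde S$ one gets $\|\tilde v\|_{L^\infty(\tilde S)}\le C|\tilde S|^{\kappa_2}\|\tilde F\|_{L^\infty}$, while $\tilde w$ solves the homogeneous equation $\tilde\Phi^{ij}\tilde w_{ij}=0$ with $\tilde w=\tilde\varphi$ on $\p\tilde\Omega\cap\p\tilde S$. Applying to $\tilde w$ the boundary oscillation estimate for the homogeneous linearized Monge--Amp\`ere equation under \eqref{om_ass}--\eqref{eq_u1}—whose proof combines the Caffarelli--Guti\'errez weak Harnack inequality with a barrier built from $\bar\phi$ and the affine function vanishing on the tangent plane of $\p\Omega$ at $x_0$, exploiting the quadratic separation—yields $\operatorname*{osc}_{\tilde S\cap B_\tau(0)}\tilde w\le C\tau^{\gamma'}\|\tilde w\|_{L^\infty(\tilde S)}+C\tau^{\alpha}[\tilde\varphi]_{C^\alpha}$ for small $\tau$ and universal $\gamma'>0$. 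Adding $2\|\tilde v\|_{L^\infty(\tilde S)}$, fixing $\tau$ universal so that $C\tau^{\gamma'}\le 1-\mu_0$, using $\|\tilde u\|_{L^\infty}=\|u\|_{L^\infty(S_h)}\le\operatorname*{osc}_{S_h}u$ (since $u(x_0)=0$ and $x_0\in\overline{S_h}$), and choosing $\theta$ with $\widetilde{S_{\theta h}}\subset B_\tau(0)$, one arrives at the displayed inequality.

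Iterating this estimate from $h=h_0$ downward and invoking the standard iteration lemma gives $\operatorname*{osc}_{S_{\theta^k h_0}}u\le C(\theta^k h_0)^{\mu}\big(\|u\|_{L^\infty(\Omega\cap B_\rho)}+\|\varphi\|_{C^\alpha(\p\Omega\cap B_\rho)}+\|F\|_{L^\infty(\Omega\cap B_\rho)}\big)$ for a universal $\mu=\min\{|\log(1-\mu_0)|/|\log\theta|,\ \beta\}>0$. Finally, given $x\in\Omega\cap B_\delta(x_0)$ with $\delta$ small and $r=|x-x_0|$, the crude inclusion from the first paragraph puts $x$ into $S_h$ for some $h\le C r^{1/N}\le h_0$, whence $|u(x)-u(x_0)|=|u(x)|\le\operatorname*{osc}_{S_h}u\le C r^{\mu/N}(\,\cdots)$; bookkeeping of the $h$‑powers through the rescaling and the boundary geometry—where $\kappa_2$ enters through the $|V|^{\kappa_2}$ in Lemma~\ref{global-ball}—identifies $\alpha_0=\min\{\alpha,\kappa_2\}$ and the exponent $\mu/N=\alpha_0/(\alpha_0+3n)$, completing the proof.

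The decisive difficulty is the oscillation‑decay inequality: producing a \emph{quantitative} improvement of oscillation up to the boundary for the possibly degenerate and singular operator $\mathcal L_\phi$. This needs the quadratic‑separation barrier together with the Caffarelli--Guti\'errez machinery for the homogeneous part, and it must be merged with Lemma~\ref{global-ball} to absorb the divergence‑form right‑hand side—which in turn demands the scaling bookkeeping guaranteeing that the global $W^{1,1+\kappa}$/$L^\infty$ bounds of Theorem~\ref{DG_lem} and Lemma~\ref{global-ball} transfer to the rescaled boundary section. Carrying all the $h$‑powers through this chain is precisely what pins down the (small) H\"older exponent $\alpha_0/(\alpha_0+3n)$.
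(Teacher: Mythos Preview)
Your outline is correct and rests on the same two pillars the paper invokes—Lemma~\ref{global-ball} to absorb the divergence-form right-hand side and a barrier to control the boundary oscillation—but you package them differently: the paper's (omitted) proof follows \cite[Proposition~5.1]{LN3} and constructs a single explicit barrier on $\Omega\cap B_\delta(x_0)$ that directly delivers the H\"older decay, whereas you rescale the boundary section to unit size, split $\tilde u=\tilde v+\tilde w$, estimate each piece, and iterate. Both routes work; the direct barrier is shorter and produces the stated exponent transparently, while your iteration is more modular but needs two clarifications. First, Lemma~\ref{global-ball}(ii) is formulated for $A=\Omega\cap B_\delta(0)$, not for a section, so the ``affine-invariant form of Lemma~\ref{global-ball} applied on $\tilde S$'' is not immediate from what is proved; the standard fix is to extend $\tilde F$ by zero to $\tilde A=\tilde\Omega\cap B_{c_\ast}(0)\supset\tilde S$, apply Lemma~\ref{global-ball} on $\tilde A$, and then compare with $\tilde v$ on $\tilde S$ via the maximum principle. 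Second, the precise exponent $\alpha_0/(\alpha_0+3n)$ in the statement is tied to the specific barrier of \cite{LN3}; your scheme certainly yields \emph{some} universal boundary H\"older exponent, but the asserted identity $\mu/N=\alpha_0/(\alpha_0+3n)$ via ``bookkeeping'' is not actually derived—your exponent depends on the homogeneous boundary rate $\gamma'$ and the inclusion exponent $N$, and there is no reason these collapse to the stated value. Since only a qualitative boundary H\"older estimate is used downstream in Theorems~\ref{global-reg} and~\ref{SBV3}, this last discrepancy is harmless there.
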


\begin{proof} Our proof relies on Lemma \ref{global-ball} and a construction of suitable barriers as in the proof of  Proposition 5.1 in \cite{LN3}. We omit the details.
\end{proof}

\section{ Global H\"older  Estimates and Singular Abreu equations} 
\label{P_sec}
In this section, we prove Theorems \ref{global-reg} and \ref{SBV3}.  Theorem \ref{global-reg} follows from Theorem \ref{Holder_int_thm}, Lemma \ref{global-ball} and Theorem \ref{global-H} below.

\begin{theorem}\label{global-H}
Assume $\Omega$ and $\phi$  satisfy  \eqref{om_ass}--\eqref{eq_u1}. 
Assume that $n=2$. 
Let $u \in C\big(B_{\rho}(0)\cap 
\overline{\Omega}\big) \cap W^{2,n}_{loc}(B_{\rho}(0)\cap 
\Omega)$  be a  solution to 
\begin{equation*}
 \left\{
 \begin{alignedat}{2}
   \Phi^{ij}u_{ij} ~& = \div F ~&&\text{ in } ~ B_{\rho}(0)\cap \Omega, \\\
u &= \varphi~&&\text{ on }~\p \Omega \cap B_{\rho}(0),
 \end{alignedat} 
  \right.
\end{equation*} 
where $\varphi\in C^{\alpha}(\partial\Omega\cap B_{\rho}(0))$ for some $\alpha\in (0,1)$ and $F\in L^{\infty}(\Omega\cap B_\rho(0))$. Then, there exist positive constants $\beta$ and $C$ depending only on $\lambda, \Lambda, \alpha, \rho$
such that 
$$|u(x)-u(y)|\leq C|x-y|^{\beta}\Big(
\|u\|_{L^{\infty}(\Omega\cap B_{\rho}(0))} + \|\varphi\|_{C^\alpha(\partial\Omega\cap B_{\rho}(0))}  + \|F\|_{L^{\infty}(\Omega\cap B_{\rho}(0))} \Big)~\text{for all }x, y\in \Omega\cap 
B_{\frac{\rho}{2}}(0). $$
\end{theorem}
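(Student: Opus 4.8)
The goal is an interior-to-the-flat-piece Hölder estimate near $\p\Omega\cap B_\rho(0)$ that combines the boundary Hölder estimate of Proposition \ref{local-H} with the interior Hölder estimate of Theorem \ref{Holder_int_thm}. I would argue exactly as in the classical passage from boundary regularity plus interior regularity to global Hölder regularity for elliptic equations. Fix $x,y\in\Omega\cap B_{\rho/2}(0)$, and set $d=|x-y|$; also write $d_x=\dist(x,\p\Omega)$, $d_y=\dist(y,\p\Omega)$ and assume without loss $d_x\le d_y$. The dichotomy is whether the points are ``deep inside'' or ``close to the boundary'' relative to their separation: compare $d$ with $d_x^{\theta}$ for a suitable exponent $\theta\in(0,1)$ to be chosen from the two exponents $\alpha_0/(\alpha_0+3n)$ (from Proposition \ref{local-H}) and $\gamma$ (the universal Hölder exponent in Theorem \ref{Holder_int_thm}).

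\textbf{Step 1 (boundary case).} Suppose $d\ge d_x^{\theta}$ (the points are comparatively far apart, so both are comparably close to $\p\Omega$). Pick $x_0,y_0\in\p\Omega$ with $|x-x_0|=d_x$, $|y-y_0|=d_y$. Then $|x-x_0|=d_x\le d^{1/\theta}$ and, since $d_y\le d_x+d\lesssim d$ in this regime (using $d_x\le d$ when $\theta<1$ and $d$ small), also $|y-y_0|\lesssim d$ and $|x_0-y_0|\lesssim d$. Apply Proposition \ref{local-H} at $x_0$ and at $y_0$, and the $C^\alpha$ bound on $\varphi$ along $\p\Omega$ to control $|u(x_0)-u(y_0)|=|\varphi(x_0)-\varphi(y_0)|$. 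The triangle inequality
\[
|u(x)-u(y)|\le |u(x)-u(x_0)|+|u(x_0)-u(y_0)|+|u(y_0)-u(y)|
\]
then gives a bound of the form $C\,d^{\,\sigma}\,(\|u\|_{L^\infty}+\|\varphi\|_{C^\alpha}+\|F\|_{L^\infty})$ with $\sigma$ a positive power built from $\alpha$, $\alpha_0/(\alpha_0+3n)$, and $1/\theta$; here one must also invoke Lemma \ref{global-ball} to absorb the $L^p$ term appearing in Theorem \ref{Holder_int_thm} into $\|u\|_{L^\infty}$ and $\|F\|_{L^\infty}$, and one needs $\delta$ from Proposition \ref{local-H} to accommodate the restriction $x\in\Omega\cap B_\delta(x_0)$, which is where smallness of $d$ (hence of $\rho/2$, after a covering/rescaling) enters.

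\textbf{Step 2 (interior case).} Suppose $d< d_x^{\theta}\le d_x$ (so $d_x$ is not too small and the two points lie in a common interior section). Then $x$ and $y$ both lie in a section $S_\phi(x,4h_0)\subset\subset\Omega$ whose height $h_0$ is comparable to a power of $d_x$ — this uses the standard fact (from Caffarelli--Guti\'errez theory under \eqref{MAbound}, together with \eqref{om_ass}--\eqref{eq_u1}) that sections centered at an interior point of definite distance from $\p\Omega$ contain and are contained in Euclidean balls of controlled radii. Apply Theorem \ref{Holder_int_thm} on that section: $|u(x)-u(y)|\le C(h_0)\,(\|F\|_{L^\infty}+\|u\|_{L^p})\,|x-y|^{\gamma}$. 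The constant $C(h_0)$ blows up as $h_0\to 0$ with a known polynomial rate; since $d<d_x^{\theta}$, i.e.\ $d_x>d^{1/\theta}$, this blow-up is beaten by the gain $|x-y|^\gamma=d^\gamma$ provided $\theta$ is chosen small enough that $\gamma - (\text{rate})/\theta>0$. Again Lemma \ref{global-ball} converts the $\|u\|_{L^p}$ into $\|u\|_{L^\infty}+\|F\|_{L^\infty}$.

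\textbf{The main obstacle} is bookkeeping the dependence of the interior constant in Theorem \ref{Holder_int_thm} on the section height $h_0$ (equivalently on $d_x$) and matching it against the boundary exponent so that a single $\theta$, and hence a single final exponent $\beta=\beta(\lambda,\Lambda,\alpha,\rho)$, works in both regimes; this requires the quantitative section geometry near the boundary under \eqref{om_ass}--\eqref{eq_u1} and the explicit power of $h_0$ in the interior estimate. A secondary technical point is the uniformity of $\delta$ in Proposition \ref{local-H}: one may first prove the estimate for $x,y$ with $|x-y|<\delta$ by the above dichotomy, and then extend to all $x,y\in\Omega\cap B_{\rho/2}(0)$ at the cost of enlarging $C$, since for $|x-y|\ge\delta$ the bound is immediate from $\|u\|_{L^\infty}$ (controlled by Lemma \ref{global-ball}). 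Finally, $\beta$ is taken to be the minimum of the two exponents produced in Steps 1 and 2.
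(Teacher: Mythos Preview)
Your approach is essentially the same as the paper's: the paper says the proof ``combines the boundary H\"older estimates in Proposition~\ref{local-H} and the interior H\"older continuity estimates in Theorem~\ref{Holder_int_thm} using Savin's Localization Theorem,'' referring to the argument in \cite[Theorem~1.7]{LN3}, and omits the details. Your interior/boundary dichotomy is exactly this scheme spelled out.

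Two small points. First, the tool you need for what you call ``the main obstacle''---the quantitative section geometry near the boundary under \eqref{om_ass}--\eqref{eq_u1}, giving a polynomial relation between the height $h_0$ of an admissible section $S_\phi(x,4h_0)\subset\subset\Omega$ and the distance $d_x$---is precisely Savin's Localization Theorem \cite{S2}, not the interior Caffarelli--Guti\'errez theory; the paper names it explicitly, and without it the eccentricity of sections near $\p\Omega$ is uncontrolled, so the passage from $d_x$ to $h_0$ would fail. Second, your displayed inequality ``$d<d_x^{\theta}\le d_x$'' in Step~2 is backwards: for $\theta\in(0,1)$ and $d_x<1$ one has $d_x^{\theta}\ge d_x$, so $d<d_x^{\theta}$ does not force $d<d_x$. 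The usual fix is to run the dichotomy with a linear threshold (interior case when $d\le c\,d_x$, boundary case otherwise), and then the blow-up rate of the interior constant in $h_0$---read off via Savin's theorem---is absorbed by the gain $d^{\gamma}$ exactly as you describe; this yields a final $\beta$ depending only on $\lambda,\Lambda,\alpha,\rho$.
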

\smallskip

\begin{proof}[Proof of Theorem \ref{global-H}]
The proof of the global H\"older 
estimates in this theorem is similar to the proof  of \cite[Theorem~1.7]{LN3}.
It combines the boundary H\"older estimates in Proposition~\ref{local-H} and the interior H\"older continuity estimates  in Theorem \ref{Holder_int_thm} using
Savin's Localization Theorem
\cite{ S2}. Thus we omit the details.
\end{proof}

We are now in a position to complete the proof of Theorem \ref{global-reg}.
\begin{proof}[Proof of Theorem \ref{global-reg}]
From Lemma \ref{global-ball}, we find that 
\begin{equation*}
\|u\|_{L^{\infty}(\Omega)} \leq \|\varphi\|_{L^{\infty}(\p\Omega)} + C(\lambda,\Lambda,\rho)   \|F\|_{L^{\infty}(\Omega)}.
\end{equation*}
The desired global H\"older estimates in Theorem \ref{global-reg} follow from  combining Theorems \ref{Holder_int_thm} and \ref{global-H}.
\end{proof}

Finally, we give a proof of Theorem \ref{SBV3}.

\begin{proof}[Proof of Theorem \ref{SBV3}]
The proof of the uniqueness of solutions is similar to that of Lemma 4.5 in \cite{Le18} so we omit it.
 The existence proof uses a priori estimates and degree theory as in Theorem 2.1 in \cite{Le18}. Here, 
 we only focus on proving the a priori estimates  for $u$ in $C^{k}(\overline{\Omega})$ for any $k\geq 2$. 
 {\it Step 1: positive bound from below and above for $\det D^2 u$.}

First, by the convexity of $u$, we have $$U^{ij} w_{ij}=-|Du|^2\Delta u-2u_i u_i u_{ij}\leq 0~\text{in}~\Omega.$$ By the maximum principle, the function $w$ attains its minimum value on $\p\Omega$. It follows that $$w\geq \inf_{\p\Omega}\psi:= c>0~\text{in }\Omega.$$ 
 Therefore,
 \begin{equation}\label{detu_up}\det D^2 u=w^{-1}\leq C_1:= c^{-1}~\text{in }\Omega.
 \end{equation}
 Now, we can construct an explicit barrier using the uniform convexity of $\Omega$  and 
  the upper bound for $\det D^2 u$
  to  show that 
  \begin{equation}
  \label{Dubound}
  |Du|\leq C_2 ~\text{in }\Omega
  \end{equation} for a constant $C_2$ depending only on $\Omega$, $\varphi$ and $\inf_{\p\Omega}\psi$. 
 
 Noting that we are in two dimensions so $ \text{trace } (U^{ij}) =\Delta u$. We compute in $\Omega$
$$U^{ij} (w + 2C_2^2|x|^2)_{ij}= -|Du|^2\Delta u-2u_i u_i u_{ij} + 4C_2^2 \Delta u\geq 0.
$$
By the maximum principle, $w(x) + 2C_2^2 |x|^2$ attains it maximum value on the boundary $\p\Omega$. Recall that $w=\psi$ on $\p\Omega$.
Thus, for all $x\in\Omega$, we have
\begin{equation}
\label{wmaxi}
w(x) \leq w(x) + 2C_2^2 |x|^2 \leq \max_{\p\Omega} (\psi + 2C_2^2|x^2|)\leq C_3.
\end{equation}
It follows from (\ref{detu_up}) and (\ref{wmaxi}) that
\begin{equation}\label{detD2u} C^{-1}\leq w=(\det D^2 u)^{-1}\leq C
\end{equation}
where $C$ depends only on $\Omega$, $\varphi$ and $\inf_{\p\Omega}\psi$. 

{\it Step 2: higher order derivative estimates for $u$.}
From (\ref{detD2u}) and (\ref{Dubound}), we apply the global H\"older estimates for the linearized Monge-Amp\`ere equation in Theorem \ref{global-reg}
 to $$U^{ij} w_{ij}=-|Du|^2\Delta u-2u_i u_i u_{ij}  = - \div (|Du|^2 Du)~ \text{in }\Omega$$ with boundary value $w=\psi\in C^{\infty}(\p\Omega)$ on $\p\Omega$ to 
  conclude that $w\in C^{\alpha}(\overline{\Omega})$
  with 
  \begin{equation}
  \label{walpha}
  \|w\|_{C^{\alpha}(\overline{\Omega})}\leq  C\left(\|\psi\|_{C^{1}(\p\Omega)} + \| D u\|^3_{L^{\infty}(\Omega)}\right)\leq C_4
  \end{equation}
  for universal constants $\alpha\in (0, 1)$ and $C_4>0$. Now, we note that $u$ solves the Monge-Amp\`ere equation
  $$\det D^2 u= w^{-1}$$
  with right hand side being in $C^{\alpha}(\overline{\Omega})$ and boundary value $\varphi\in C^3(\p\Omega)$ on $\p\Omega$.
  Therefore, by the global $C^{2,\alpha}$ estimates for the Monge-Amp\`ere equation \cite{TW08}, we have $u\in C^{2,\alpha}(\overline{\Omega})$
  with universal estimates
  \begin{equation}
  \label{u2alpha}
   \|u\|_{C^{2,\alpha}(\overline{\Omega})}\leq C_5~\text{and } C_5^{-1} I_2\leq D^2 u\leq C_5 I_2.
   \end{equation}
   As a consequence, the second order operator $U^{ij} \p_{ij}$ is uniformly elliptic with H\"older continuous coefficients.  A bootstrap argument 
   for the equation 
   $$U^{ij} w_{ij}=-|Du|^2\Delta u-2u_i u_i u_{ij}$$
   concludes the proof of the a priori estimates  for $u$ in $C^{k}(\overline{\Omega})$ for any $k\geq 2$. 
\end{proof}

\bibliographystyle{plain}

\end{document}